\newtheorem{tm}{Theorem}
\newtheorem{rem}[tm]{Remark}
\newtheorem{lm}[tm]{Lemma}
\newtheorem{prop}[tm]{Proposition}
\newtheorem{problem}[tm]{Problem}
\newtheorem{conj}[tm]{Conjecture}
\newcommand{\si}{\sigma}
\newcommand{\bZ}{\mathbb Z}
\newcommand{\D}{\mathcal D}
\newcommand{\bsi}{\overline \sigma}
\newcommand{\tsi}{\widehat \si}
\begin{document}
\title[Something You Always Wanted to Know About Real Polynomials] {Something You Always Wanted to Know About Real Polynomials (But Were Afraid to Ask)}

\author[V.~Kostov]{Vladimir P. Kostov }
\noindent 
\address{Universit\'e C\^ote d'Azur, CNRS, LJAD, France
}
             \email{kostov@math.unice.fr}

\author[B.~Shapiro]{Boris Z. Shapiro}
\noindent 
\address{Department of Mathematics, Stockholm University, SE-106 91
Stockholm,
         Sweden}
\email{shapiro@math.su.se}

\subjclass[2010] {Primary 26C10, Secondary 30C15}

\keywords{standard discriminant, Descartes' rule of signs, sign pattern}

\begin{abstract}
The famous Descartes' rule of signs from 1637 giving an upper bound on the number of positive roots of a real univariate polynomials in terms of the number of sign changes of its coefficients, has been an indispensable source of inspiration for generations of mathematicians.  Trying to extend   and sharpen this rule,   we consider below the set of all real univariate polynomials of a given degree, a given collection of  signs of their coefficients,  and a given number of positive and negative roots.  In spite of the elementary definition of the main object of our study, it is a non-trivial question for which sign patterns and numbers of positive and negative roots the corresponding set is non-empty. The main result of the present paper is a discovery of a new infinite family of non-realizable combinations of sign patterns and the numbers of positive and negative roots.   
\end{abstract}

\date{}
\maketitle

\section{Introduction}

This paper\footnote{The title of the present paper alludes  to one of the funniest movies by Heywood ``Woody" Allen, the favorite movie director of the second author.} continues the line of study of   Descartes' rule of signs initiated in   \cite{FoKoSh}.  The basic set-up under consideration is as follows. 

\medskip
Consider the affine space $Pol_d$ of all real monic univariate polynomials of degree $d$. 
Below we  concentrate on polynomials from $Pol_d$ with all non-vanishing coefficients. An arbitrary ordered sequence 
$\bsi=(\si_0,\si_1,\dots,\si_d)$ of $\pm$-signs  is called a {\em sign pattern}. When working with monic polynomials we will  use their {\em shortened sign patterns} $\tsi$ representing the signs of all coefficients except the leading term which equals $1$. For the actual sign pattern $\bsi,$ we  write $\bsi=(1,\tsi)$ to emphasise that we consider  monic polynomials.


Given a shortened sign pattern $\tsi$, we call by its {\em Descartes' pair} $(p_{\tsi}, n_{\tsi})$ the pair of non-negative integers counting sign changes and sign preservations of $\bsi=(1,\tsi)$.  By Descartes' rule of signs,  $p_{\tsi}$ (resp. $n_{\tsi}$) gives the upper bound on the number of positive (resp. negative)   roots  of any monic polynomial from $Pol_d({\tsi})$. (Observe that, for any $\tsi$, 
$p_{\tsi}+ n_{\tsi}=d$.) To any monic polynomial $q(x)$ with the sign pattern $\bsi=(1,\tsi),$ we associate  the pair $(pos_q, neg_q)$ giving the numbers of its positive and negative roots counted with multiplicities. Obviously the pair $(pos_q,neg_q)$  satisfies the standard restrictions
 \begin{equation}\label{stand} 
pos_q\le  p_{\bsi}, \;  pos_q\equiv p_{\bsi} ({\rm mod\, 2}),\; neg_q\le n_{\bsi},\; neg_q\equiv n_{\bsi} ({\rm mod\, 2}).
\end{equation}

We call pairs $(pos, neg)$ satisfying \eqref{stand} {\em admissible} for $\bsi$.  Conversely, for a given pair  $(pos, neg),$ we call a sign pattern $\bsi$ such that \eqref{stand} is satisfied {\em admitting} the latter pair. 
 It turns out that not for every pattern $\bsi,$ all its admissible  pairs  $(pos,neg)$  are realizable by polynomials with the sign pattern $\bsi$. Namely,  D.~J.~Grabiner \cite{Gr} found the first example of non-realizable combination for polynomials of degree $4$. He has shown that the sign pattern $(+,-,-,-,+)$ does not allow to realize the pair $(0,2)$ and the sign pattern  $(+,+,-,+,+)$ does not allow to realize $(2,0)$. Observe that their Descartes' pairs equal $(2,2).$  

His argument is very simple. (Due to symmetry induced by $x\mapsto -x$ it suffices to consider only the first case.) Observe that a fourth-degree polynomial with only two negative  roots for which the sum of roots is positive could be factored as $a(x^2+bx+c)(x^2-sx+t)$ with  $a,b,c,s,t>0,\; s^2<4t$, and $b^2\ge 4c$. The product of these factors equals $a(x^4+(b-s)x^3+(t+c-bs)x^2+(bt-cs)x+ct)$.
To get the correct sign pattern, we need $b<s$ and $bt<cs,$ which gives $b^2t<s^2c$ and thus $b^2/c<s^2/t$. But we have $b^2/c\ge 4>s^2/t$.   

\medskip
The following basic question and related conjecture were formulated in \cite{FoKoSh}. (Apparently for the first time Problem~\ref{prob:basic}Ê   was mentioned in \cite{AJS}.) 

\begin{problem}\label{prob:basic}
For a given sign pattern $\bsi,$  which admissible pairs $(pos,neg)$ are realizable by polynomials whose signs of coefficients are given by  $\bsi$?  
\end{problem}

\medskip
Observe that we have the natural $\bZ_2\times \bZ_2$-action   on the space of monic polynomials and on the set of all sign patterns respectively.  The first generator acts by reverting the signs of all monomials in second, 
fourth etc. position 
(which for polynomials means $P(x)\to (-1)^dP(-x)$); 
the second generator acts by reading the pattern backwards 
(which for polynomials means $P(x)\to x^dP(1/x)$). If one wants to  preserve the set of monic polynomials one has to divide $x^dP(1/x)$ by its leading term.  We will refer to the latter action as the  {\em standard $\bZ_2\times \bZ_2$-action}.  
(Up to some trivialities) the properties we will study below are invariant  under this action.  
The following initial results were partially proven in \cite{AJS, AlFu} and in complete generality in \cite{FoKoSh}.

 \begin{tm}\label{th:AlFu}
 {\rm (i)} Up to degree $d\le 3,$ for any sign pattern $\bsi$, all admissible pairs 
 $(pos,neg)$ are realizable.

\noindent 
{\rm (ii)} For $d=4,$ (up to the standard $\bZ_2\times \bZ_2$-action) the only 
 non-realizable combination is $(1,-,-,-,+)$ with the pair $(0,2)$;  
 
 \noindent
 {\rm (iii)} For $d=5,$ (up to the standard $\bZ_2\times \bZ_2$-action) the only 
 non-realizable combination is $(1,-,-,-,-,+)$ with the pair $(0,3)$;
 
  \noindent
 {\rm (iv)} For $d=6,$ (up to the standard $\bZ_2\times \bZ_2$-action) the only 
 non-realizable combinations are $(1,-,-,-,-,-,+)$ with $(0,2)$ and $(0,4)$; 
 $(1,+,+,+,-,+,+)$ with $(2,0)$; 
 $(1,+,-,-,-,-,+)$ with $(0,4)$.
  
 \end{tm}

The next two results can be found in \cite{FoKoSh} and \cite{Ko2}. 


\begin{tm}\label{th:7} For $d=7$,  among the 1472 possible combinations of a sign pattern
and a pair (up to the standard $\bZ_2\times \bZ_2$-action), there exist exactly 6 which are non-realizable. They are:
$$(1,+,-,-,-,-,-,+) \quad\text{with} \quad(0,5);\quad  (1,+,-,-,-,-,+,+) \quad\text{with} \quad(0,5);$$
$$(1,+,-,+,-,-,-,-) \quad\text{with} \quad(3,0);\quad  (1,+,+,-,-,-,-,+) \quad\text{with} \quad(0,5);$$
$$\text{and,}\quad(1,-,-,-,-,-,-,+) \quad\text{with}\;  (0,3)\;\text{and} \;(0,5).$$
\end{tm}

\begin{tm}\label{th:8}  For $d=8,$  among the 3648 possible combinations of a sign pattern and a pair (up to the standard $\bZ_2\times \bZ_2$-action), there exist exactly 13 which are non-realizable. They are:
$$(1,+,-,-,-,-,-,+,+) \quad\text{with} \quad(0,6);\quad  (1,-,-,-,-,-,-,+,+) \quad\text{with} \quad(0,6);$$
$$(1,+,+,+,-,-,-,-,+) \quad\text{with} \quad(0,6);\quad  (1,+,+,-,-,-,-,-,+) \quad\text{with} \quad(0,6);$$
$$(1,+,+,+,-,+,+,+,+) \quad\text{with} \quad(2,0);\quad  (1,+,+,+,+,+,-,+,+) \quad\text{with} \quad(2,0);$$
$$(1,+,+,+,-,+,-,+,+) \quad\text{with}\;  (2,0)\;\text{and} \;(4,0)\;;\quad(1,-,-,-,+,-,-,-,+) \quad\text{with}\;$$  
$$ (0,2)\;\text{and} \;(0,4); \quad(1,-,-,-,-,-,-,-,+) \quad\text{with}\;  (0,2), (0,4), \text{  and } (0,6).$$
\end{tm}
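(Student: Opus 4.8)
\emph{Proof proposal.} The assertion is a complete, if large, classification, so the plan is an exhaustive case analysis organised so that a small fixed set of general principles disposes of almost everything and only a short list of exceptional combinations needs individual attention. One would first list the $2^{8}=256$ shortened patterns $\tsi$ and group them into orbits of the standard $\bZ_2\times\bZ_2$-action; a Burnside count gives $72$ orbits, and for a pattern with Descartes pair $(p_{\bsi},n_{\bsi})$ the number of admissible pairs is $(\lfloor p_{\bsi}/2\rfloor+1)(\lfloor n_{\bsi}/2\rfloor+1)$, so summing over the orbits reproduces the quoted total $3648$. Since realizability is invariant under the group action (up to the trivialities noted before Theorem~\ref{th:AlFu}), it suffices to treat one representative per orbit. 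The work then splits into realizing everything off the list and proving the thirteen listed combinations non-realizable.

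\emph{Realizability.} Here the tools are standard: (i) for every $\bsi$ the Descartes pair $(p_{\bsi},n_{\bsi})$ is realizable --- split $\bsi$ into its maximal constant and maximal alternating runs and concatenate; (ii) the concatenation lemma, that if $\bsi$ is obtained by glueing patterns of degrees $d'$ and $8-d'$ along a shared sign and the pieces realize $(p',n')$ and $(p'',n'')$, then $\bsi$ realizes $(p'+p'',n'+n'')$, the witness being a product of witnesses with a wide separation of the root scales; (iii) prolongation moves --- multiplying a lower-degree witness by $(x\pm a)$ with $a$ very large or very small, or by a definite or indefinite quadratic $x^{2}+bx+c$, each of which transforms $\bsi$ and $(pos,neg)$ in an explicit way; (iv) confluence--splitting --- coalescing two simple roots of equal sign and pushing the resulting double root off the real axis lowers $(pos,neg)$ by $(2,0)$ or $(0,2)$ while preserving $\bsi$, provided no coefficient is driven through zero. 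Running the $d\le7$ classifications (Theorems~\ref{th:AlFu} and \ref{th:7}) through (i)--(iv) realizes all but finitely many of the $3648$ combinations, and a careful, possibly computer-assisted, pass through the $72$ orbits shows that the ones not produced this way are exactly the thirteen of the statement together with a handful of ``stubborn but realizable'' ones, for which one writes down explicit one- or two-parameter families (perturbations of products of $x-1$, $x+1$ and $x^{2}\pm x+1$) and checks the signs directly.

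\emph{Non-realizability.} This is the heart of the proof. Modulo the group action the thirteen reduce to a shorter list of essentially different cases, and the uniform tool is a Grabiner-type factorization of a hypothetical witness, $q=c\prod_i(x-u_i)\prod_j(x+v_j)\prod_k(x^{2}-s_kx+t_k)\prod_\ell(x^{2}+p_\ell x+q_\ell)$ with all of $u_i,v_j,s_k,t_k,p_\ell,q_\ell$ positive, the four blocks recording the positive roots, the negative roots, the complex pairs of positive real part and those of negative real part, while one watches the discriminant inequalities $p_\ell^{2}<4q_\ell$ and $s_k^{2}<4t_k$ (or $>4t_k$). Expanding and reading off the sign of each coefficient turns $\bsi$ into a system of inequalities for the elementary symmetric functions of the $u_i$ and the $v_j$ and for the quadratic data. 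For the ``boundary'' exceptions --- foremost $(1,-,-,-,-,-,-,-,+)$ with $(0,6)$ and its relatives with a single complex pair --- two of these inequalities combine exactly as in Grabiner's degree-$4$ computation: from the $x^{7}$- and $x^{1}$-coefficient conditions together with $b^{2}<4c$ for the unique quadratic factor $x^{2}+bx+c$ one gets $\bigl(\sum_{j=1}^{6}v_j\bigr)\bigl(\sum_{j=1}^{6}1/v_j\bigr)<4$, which contradicts Cauchy--Schwarz; analogous AM--GM and Newton-type contradictions settle the remaining ``$(0,\mathrm{even})$'' exceptions, and the ``$(\mathrm{even},0)$'' exceptions then follow from these by the symmetry $x\mapsto-x$, which is part of the standard action. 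For the ``interior'' exceptions --- e.g. $(0,2)$ and $(0,4)$ for $(1,-,-,-,-,-,-,-,+)$, the pairs for $(1,-,-,-,+,-,-,-,+)$, and $(2,0)$ and $(4,0)$ for $(1,+,+,+,-,+,-,+,+)$ --- the factorization alone does not close and must be combined with a descent in the degree: $q$ with pattern $\bsi$ and pair $(pos,neg)$ has derivative with pattern $(\si_0,\dots,\si_7)$ and, by Rolle, pair at least $(pos-1,neg-1)$, and the same holds for $x^{8}q(1/x)$, so one tries to push the obstruction down to a degree already settled by Theorems~\ref{th:AlFu} and \ref{th:7}; where Rolle loses too much one falls back on a direct boundary analysis --- scale one real root to $-1$, let further real roots coalesce, and track the coefficient signs along the resulting rational arc of polynomials.

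\emph{Main obstacle.} The genuine difficulty is exactly the non-realizability of the ``interior'' pairs: there the clean factorization-and-inequality mechanism stalls because the extra complex pairs supply too many free parameters, and the derivative descent can fail because $q'$ may pick up more real roots than Rolle's theorem guarantees, so the lower-degree classification no longer forbids its configuration. I expect the most delicate individual cases to be $(1,-,-,-,-,-,-,-,+)$ with $(0,4)$ and $(1,+,+,+,-,+,-,+,+)$ with $(2,0)$, which will likely need arguments tailored to those specific patterns. A secondary, purely organizational hazard is certifying that the toolkit of the realizability part genuinely covers \emph{every} admissible combination outside the list of thirteen, which forces a meticulous sweep through all $72$ orbits.
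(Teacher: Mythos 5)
There is a genuine gap: what you have written is a research program, not a proof, and the paper itself offers no proof to compare it with --- Theorem~\ref{th:8} is imported from \cite{FoKoSh} and \cite{Ko2}, where the classification rests on a computer-aided exhaustive realizability sweep plus individually tailored non-realizability arguments. Your ``extreme case'' computation is sound: for $(1,-,-,-,-,-,-,-,+)$ with $(0,6)$, writing the witness as $\prod_{j=1}^{6}(x+v_j)(x^2+bx+c)$, the $x^7$- and $x^1$-coefficient conditions together with $b^2<4c$ do force $\bigl(\sum v_j\bigr)\bigl(\sum 1/v_j\bigr)<4$, contradicting Cauchy--Schwarz, and this is essentially the mechanism used in \cite{FoKoSh}. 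But for the ``interior'' pairs --- $(0,2)$ and $(0,4)$ for $(1,-,-,-,-,-,-,-,+)$ and for $(1,-,-,-,+,-,-,-,+)$, and $(2,0)$, $(4,0)$ for $(1,+,+,+,-,+,-,+,+)$ --- you only name candidate tools (Rolle descent, boundary analysis) and yourself concede they may fail; no argument is actually produced for precisely the cases you identify as the heart of the matter. A second concrete error: you claim the $(\mathrm{even},0)$ exceptions ``follow from'' the $(0,\mathrm{even})$ ones via $x\mapsto -x$. They do not. The thirteen are already listed up to the standard $\bZ_2\times\bZ_2$-action, so e.g.\ $(1,+,+,+,-,+,+,+,+)$ with $(2,0)$ maps under $x\mapsto -x$ to $(1,-,+,-,-,+,+,-,+)$ with $(0,2)$, which is not on the list; these are separate orbits each needing its own proof (in the literature they are handled by a dedicated statement about patterns whose odd-degree coefficients are all positive).

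On the realizability side the gap is equally real. Asserting that concatenation, prolongation and confluence ``realize all but finitely many'' combinations and that a ``careful, possibly computer-assisted, pass'' handles the rest is exactly the part that cannot be waved at: historically, after \cite{FoKoSh} seven degree-$8$ combinations (e.g.\ $(1,+,-,+,-,-,-,-,+)$ with $(4,0)$, $(1,+,+,-,-,-,-,+,+)$ with $(0,6)$) remained open and were only resolved --- as realizable --- in \cite{Ko2} by constructions considerably more delicate than perturbations of products of $x\pm1$ and $x^2\pm x+1$. Without either carrying out that sweep or at least exhibiting witnesses for those specific stubborn cases, the word ``exactly'' in the statement is not established. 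In short: your outline correctly identifies the architecture of the known proof and correctly executes one family of non-realizability arguments, but the interior non-realizability cases and the completeness of the realizability list --- the two places where all the difficulty lives --- are left unproven.
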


Based on Theorems~\ref{th:AlFu} -- \ref {th:8}, we formulated in \cite {FoKoSh} the following guess. 
\medskip

\begin{conj}\label{conj:main} For an arbitrary sign pattern $\bsi$, the only type of pairs $(pos,neg)$ which can be non-realizable has either $pos$ or $neg$ vanishing. In other words, for any sign pattern $\bsi$, each pair $(pos,neg)$ satisfying \eqref{stand}  with positive $pos$ and $neg$ is realizable. 
\end{conj}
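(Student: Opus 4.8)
The natural strategy is induction on the degree $d$, with the base cases $d\le 8$ supplied by Theorems~\ref{th:AlFu}, \ref{th:7} and \ref{th:8}. Fix a sign pattern $\bsi$ of degree $d$ and an admissible pair $(pos,neg)$ with $pos\ge 1$ and $neg\ge 1$; we must produce a monic degree-$d$ polynomial with sign pattern $\bsi$, exactly $pos$ positive roots and exactly $neg$ negative roots (counted with multiplicity). Up to the standard $\bZ_2\times\bZ_2$-action we may normalise $\bsi$, say so that the first non-leading sign is $\si_1=+$.

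The main device I would use is a gluing construction for sign patterns. If $\bsi^{(1)}=(1,\si^{(1)}_1,\dots,\si^{(1)}_{d_1})$ is realised by a polynomial $P_1$ with $(pos_1,neg_1)$ real roots, and $\bsi^{(2)}$ of degree $d_2$ is realised by $P_2$ with $(pos_2,neg_2)$ real roots, and the last sign of $\bsi^{(1)}$ is compatible with the leading sign of $\bsi^{(2)}$, then for $0<\eps\ll 1$ the polynomial $P_1(x)P_2(\eps x)$ (normalised to be monic) has the sign pattern obtained by concatenating $\bsi^{(1)}$ with the non-leading tail of $\bsi^{(2)}$, and has $(pos_1+pos_2,neg_1+neg_2)$ real roots: the rescaling separates the moduli of the two groups of roots, so the top $d_1$ coefficients of the product are governed by those of $P_1$ and the bottom $d_2$ by a positive multiple of those of $P_2$. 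Granting such a lemma, I would cut $\bsi$ at a cleverly chosen position $k$ into pieces $\bsi^{(1)},\bsi^{(2)}$ and split $pos=pos_1+pos_2$, $neg=neg_1+neg_2$ so that each $(pos_i,neg_i)$ is admissible for $\bsi^{(i)}$ and realisable --- by the induction hypothesis when $pos_i,neg_i\ge 1$, and by the explicit low-degree classification otherwise.

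The delicate point, which I expect to be the real obstacle, is controlling any piece in which one of $pos_i,neg_i$ is forced to be $0$: those are precisely the combinations that Conjecture~\ref{conj:main} itself concedes may be non-realisable, so the induction does not close automatically. A successful argument would need (a) a structural description of where the non-realisable ``$(pos,0)$'' and ``$(0,neg)$'' combinations actually occur --- empirically they cluster at sign patterns containing a single long run of equal signs, flanked appropriately --- and (b) a lemma asserting that such a ``defective'' block can always be confined to one of the two pieces of the splitting, leaving the complementary piece with both root counts positive. One would also have to push the gluing lemma through the limiting case in which a factor contributes no root of one sign (for instance $P_2$ with no real roots at all), which ultimately rests on having precise control of realisability for the minimal pairs $(0,0)$, $(1,0)$ and $(0,1)$ at every degree.

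I would be cautious, however, about expecting this program to go through for all $d$. The gluing construction only ever manufactures combinations that are ``sums'' of smaller realisable ones, and there is no evident reason why an arbitrary admissible $(pos,neg)$ with $pos,neg\ge 1$ for an adversarially chosen $\bsi$ --- say a long constant or alternating run together with a pair that cannot be split across any cut of $\bsi$ without creating a non-realisable sub-combination --- should admit such a decomposition once $d$ is large. So the realistic outcome of this plan is either a proof valid only up to some moderate degree, or, guided by exactly the gluing obstruction just described, the discovery of a counterexample showing that Conjecture~\ref{conj:main} is in fact false.
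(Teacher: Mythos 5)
You are attempting to prove a statement that the paper itself does not prove: Conjecture~\ref{conj:main} is precisely that --- a conjecture. The paper only reports that it ``has been verified by computer-aided methods up to $d=10$,'' and the paper's actual contribution (Theorem~\ref{th:series}) is a new infinite family of non-realizable combinations of the complementary type $(pos,0)$, offered as \emph{evidence for} the conjecture, not a proof of it. So there is no proof in the paper to compare yours against, and your proposal does not supply one either: it is a strategy sketch whose central step is left open, as you yourself candidly admit in your final paragraph.

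The concrete gap is the one you already name. The concatenation device (Lemma~\ref{lmconcat}, i.e.\ Lemma~14 of \cite{FoKoSh}) only manufactures combinations of the form $(pos_1+pos_2,\,neg_1+neg_2)$ over a cut of $\bsi$ into $\bsi^{(1)},\bsi^{(2)}$, and for the induction to close you would need: (a) that every admissible pair with $pos,neg\ge 1$ admits \emph{some} cut and \emph{some} additive splitting in which each summand is both admissible for its piece (including the mod-$2$ constraints of \eqref{stand} relative to the Descartes pair of that piece, which you do not address) and realizable; and (b) a classification of the non-realizable $(pos,0)$ and $(0,neg)$ combinations sharp enough to steer the splitting away from them. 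Neither is available --- indeed (b) is itself open, and the paper's main theorem enlarges the known list of such defective combinations rather than completing it. Without (a) and (b) the induction does not close, and no amount of care with the base cases $d\le 8$ repairs this. The honest conclusion is that your text is a reasonable research plan, correctly diagnosing its own obstruction, but it is not a proof, and the statement remains open.
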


At the moment Conjecture~\ref{conj:main} has been verified by computer-aided methods up to $d=10$. 
The main result of the present paper  is a discovery of a new  infinite series of non-realizable patterns which supports Conjecture~\ref{conj:main}. (Two other series can be found in \cite{FoKoSh}.)  
  Namely, for a fixed odd degree $d\geq 5$ and $1\leq k\leq (d-3)/2$, denote by $\sigma _k$ the sign pattern 
beginning with two pluses followed by $k$ pairs $``-,+"$ and then by 
$d-2k-1$ minuses. Its Descartes' pair equals $(2k+1,d-2k-1)$.

\begin{tm} \label{th:series}
{\rm(i)} The sign pattern $\sigma _k$ is not realizable with any of 
the pairs $(3,0)$, 
$(5,0)$, $\ldots$, $(2k+1,0)$; 

\noindent 
{\rm (ii)} the sign pattern $\sigma _k$ is realizable with the pair $(1,0)$;

\noindent 
{\rm (iii)} the sign pattern $\sigma _k$ is realizable with any of the pairs 
$(2\ell +1,2r)$, $\ell =0$, $1$, $\ldots$, $k$, $r=1$, $2$, $\ldots$, 
$(d-2k-1)/2$.
\end{tm}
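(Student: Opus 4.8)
The plan is to settle the three parts separately; part~(i) is the only hard one, and within it the appearance of several non-real factors is the real obstacle.

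\emph{Part~(i).} First I would normalize by the $x\mapsto-x$ generator of the standard $\bZ_2\times\bZ_2$-action: realizing $\sigma_k$ with a pair $(2j+1,0)$ is equivalent to realizing, with $(0,2j+1)$, the pattern $\sigma_k^-$ obtained from $\sigma_k$ by reversing all signs in odd positions, and one checks that $\sigma_k^-=(+,\underbrace{-,\dots,-}_{2k+2},+,-,+,\dots,+)$, the tail having odd length $d-2k-2$. So suppose $Q$ is monic of degree $d$, has sign pattern $\sigma_k^-$, has no positive root and exactly $2j+1\ge3$ negative roots, and factor $Q=R\cdot S$ with $R=\prod_{i=1}^{2j+1}(x+\rho_i)$ ($\rho_i>0$) collecting the negative roots and $S=\prod_{l=1}^{m}(x^2+b_lx+c_l)$ ($m=(d-2j-1)/2$, $c_l>0$, $b_l^2<4c_l$) the non-real conjugate pairs. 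Then $R$ has strictly positive coefficients, the elementary symmetric functions $e_0=1,e_1,\dots,e_{2j+1}$ of the $\rho_i$, and $S$ is positive on all of $\mathbb{R}$. Reading the sign of the coefficient of $x^{d-1}$ in $RS$ (which must be negative) gives $\sum_lb_l<-e_1<0$; reading that of the coefficient of $x^{1}$ (also negative) gives, after dividing by $\prod_lc_l>0$, the inequality $e_{2j+1}\sum_l(b_l/c_l)+e_{2j}<0$. This mirrors the structure of Grabiner's degree-$4$ argument.

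Next I would close the base case $m=1$ (equivalently $j=k=(d-3)/2$), where $S=x^2+b_1x+c_1$: from $b_1<-e_1<0$ and $c_1>b_1^2/4$ we get $b_1/c_1>-4/|b_1|>-4/e_1$, so the second inequality forces $e_1e_{2j}<4e_{2j+1}$, contradicting the Cauchy--Schwarz bound $e_1e_{2j}=e_{2j+1}\bigl(\sum\rho_i\bigr)\bigl(\sum1/\rho_i\bigr)\ge(2j+1)^2e_{2j+1}\ge9e_{2j+1}$, valid since $j\ge1$. For $m\ge2$ the coefficient of $x^1$ alone is no longer a genuine constraint (one may let a single $b_l\to0^-$ together with $c_l\to0^+$), and \emph{this is the main obstacle}. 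The plan there is a descent on $d$ and $k$: from a realization of $\sigma_k$ of degree $d$ with $(2j+1,0)$ and $j<k$, discard one carefully chosen non-real quadratic factor of $Q$ (or perform a small deformation of it) so as to obtain a realization of $\sigma_{k-1}$, or of $\sigma_k$ of degree $d-2$, with a pair of the same shape $(2j'+1,0)$, $j'\ge1$, iterating down to the case $m=1$. The delicate point is to guarantee that the prescribed sign pattern survives the reduction; failing a clean descent, the fallback is to use several of the interior coefficient inequalities of $RS$ simultaneously together with a sharper symmetric-function estimate, at the price of a more computational argument.

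\emph{Parts~(ii) and (iii) are constructions.} For~(ii) I would take $q(x)=(x-a)T(x)$ with $a>0$ not infinitesimal and $T$ monic of degree $d-1$ strictly positive on $\mathbb{R}$, so that $q$ has the single simple positive root $a$ and no other real root; writing $T=\prod_l(x^2+b_lx+c_l)$ and imposing the $d$ sign conditions on the coefficients $\tau_{i-1}-a\tau_i$ of $q$ gives a system in $a$ and the $b_l,c_l$ that is solved explicitly (already for $d=5$, e.g. $q=(x-\tfrac{21}{20})(x^2+\tfrac52x+\tfrac85)(x^2-x+\tfrac{17}{10})$), and the same scheme --- one quadratic factor with large positive linear coefficient to produce the two leading pluses, the remaining factors small to produce the trailing minuses --- should work for all $d,k$; alternatively one glues the ``head'' $(+,+,-,+,\dots,+)$ realized with $(1,0)$ to the ``tail'' $(+,-,\dots,-)$ realized with $(0,0)$ by a concatenation lemma of the kind in \cite{FoKoSh}. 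For~(iii) there is ample room: take a polynomial with $2\ell+1$ positive roots, $2r$ negative roots and $d-2\ell-2r-1$ roots in non-real conjugate pairs, with the real roots of widely separated magnitudes suitably interleaved with the moduli of the quadratic factors, so that every coefficient of the product is dominated by a single term of determined sign, and check that the resulting sign sequence is $\sigma_k$; equivalently, cut $\sigma_k$ into shorter patterns each realizable --- by Theorem~\ref{th:AlFu} or by a trivial all-real-roots realization --- with the required sub-pairs, and add the pairs by gluing.
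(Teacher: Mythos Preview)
Your argument for Part~(i) has a genuine gap. The Grabiner-style base case $m=1$ is fine, but it only treats the single pair $(2k+1,0)$; for every smaller pair $(3,0),\dots,(2k-1,0)$ one has $m\ge2$, and there your descent (``discard one carefully chosen non-real quadratic factor'') is unjustified. Dividing $Q$ by one of its quadratic factors gives a degree $d-2$ polynomial whose coefficient signs you have no control over --- there is no mechanism forcing it to have sign pattern $\sigma_{k-1}^-$ or $\sigma_k^-$ of lower degree --- and a ``small deformation'' of a removed factor does not change this. You acknowledge the obstacle yourself and offer only a vague fallback; as written this is not a proof for $j<k$.

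The paper avoids factoring $P$ altogether. It works directly with $\sigma_k$ (no $x\mapsto-x$), splits $P=P_o+P_e$ into its odd and even parts, and observes that each of $P_o$, $P_e$ has exactly one sign change in its coefficient sequence, hence exactly one simple positive root. If $P$ had three or more positive roots $\xi_1<\xi_2<\xi_3$, a short sign analysis of $P_o(\zeta),P_e(\zeta)$ at a point $\zeta\in(\xi_1,\xi_2)$ eliminates three of the four sign possibilities (two are ruled out by monotonicity of $P_o,P_e$ beyond their unique positive roots, one by $P(-\zeta)=P_e(\zeta)-P_o(\zeta)>0$ forcing negative roots). In the remaining case one normalizes $\xi_1=1$, $P_o(1)=1$, $P_e(1)=-1$, and proves the key lemma that $P^{(m)}(1)>0$ for all $m\ge1$ by a monotonicity argument on where the negative mass among the coefficients can sit; hence $P$ has no root beyond $\xi_1$, a contradiction. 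This treats all $(2s+1,0)$, $1\le s\le k$, uniformly.

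For Part~(ii) you are working much too hard: the paper simply perturbs $x^d-1$ to $x^d-1+\varepsilon\sum_{j=1}^{d-1}c_jx^j$ with $c_j=\pm1$ matching $\sigma_k$ and $\varepsilon>0$ small, which immediately has one simple real root near $1$ and the correct signs. For Part~(iii) your gluing idea is correct and is exactly what the paper does, but it is carried out explicitly via the concatenation lemma of \cite{FoKoSh}: start from $x+1$ realizing $(+,+)$ with $(0,1)$, append $k-\ell$ copies of $x^2-2x+2$ then $2\ell+1$ copies of $x-1$ to realize $(+,+,\sigma^*)$ with $(2\ell+1,1)$, then append $2r-1$ copies of $x+1$ and $(d-2k-1)/2-r$ copies of $x^2+2x+2$ to reach $\sigma_k$ with $(2\ell+1,2r)$.
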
 

Notice that Cases (i), (ii) and (iii) exhaust all possible admissible pairs $(pos,neg)$. It is also worth mentioning that the only non-realizable case for degree 5
 (up to the $\bZ_2\times \bZ_2$-action) and the third and the last two non-realizable cases for degree 7 mentioned above 
are covered by Theorem~\ref{th:series}.

\medskip
The structure of the paper is as follows. In \S~\ref{sec:proofs} we present a proof of Theorem~\ref{th:series}. In \S~\ref{sec:zoo} we present the detailed structure of the discriminant loci and (non)realizable patterns for polynomials of degrees 3 and 4.

\medskip 
\noindent 
{\em Acknowledgements.}   The first author is grateful to the Mathematics Department of Stockholm University for the hospitality.  

\section{Proofs}\label{sec:proofs}

\begin{proof}[Proof of Theorem~\ref{th:series}]

Part (i): Suppose that a polynomial $P:=\sum _{j=0}^da_jx^{d-j}$ has the sign pattern 
$\sigma _k$ and realizes the pair $(2s+1,0)$, $1\leq s\leq k$. 
Denote by  
$$P_e:=\sum _{\nu =0}^{(d-1)/2}a_{2\nu +1}x^{d-2\nu -1}~~~{\rm and}~~~ 
P_o:=\sum _{\nu =0}^{(d-1)/2}a_{2\nu}x^{d-2\nu}$$ 
its even and odd parts respectively. In each of 
the sequences $\{ a_{2\nu +1}\} _{\nu =0}^{(d-1)/2}$ and 
$\{ a_{2\nu}\} _{\nu =0}^{(d-1)/2}$ there is exactly one sign change. Therefore 
each of the polynomials $P_e$ and $P_o$ has exactly one real positive root 
(denoted by $x_e$ and $x_o$ respectively) which is simple. The polynomial 
$P_e$ (resp. $P_o$) is positive and increasing on $(x_e,\infty )$ (resp. 
on $(x_o,\infty )$) and negative on $[0,x_e)$ (resp. on $(0,x_o)$). 

The polynomial $P$ has at least three distinct positive roots. Denote the 
smallest of them by 
$0<\xi _1<\xi _2<\xi _3$. Hence at any point $\zeta \in (\xi _1,\xi _2)$ 
one has the $P(\zeta )>0$; clearly $P$ is negative on $(\xi _2,\xi _3)$. 
One can choose $\zeta \neq x_e$ and 
$\zeta \neq x_o$. Hence it is 
impossible to have $P_e(\zeta )<0$ and $P_o(\zeta )<0$. It is also 
impossible to have $P_e(\zeta )>0$ and $P_o(\zeta )>0$. Indeed, 
this would imply that $x_e<\zeta$ and $x_o<\zeta$. Thus one would get  
$P_e(x)>0$ and $P_o(x)>0$, i.e. $P(x)>0$, 
for $x\in (\xi _2,\xi _3)$ -- a contradiction.

The  two remaining possibilities are: 

\noindent
{\rm a)} $P_e(\zeta )>0$, $P_o(\zeta )<0$; 

\noindent
{\rm b)} $P_e(\zeta )<0$, $P_o(\zeta )>0$. 

\medskip
The first one is impossible because it would imply that  
$$P(-\zeta )=P_e(\zeta )-P_o(\zeta )>0~,$$
and since $P(0)<0$ and $P(x)\rightarrow -\infty$ for $x\rightarrow -\infty$, the 
polynomial $P$ would have at least one negative root in $(-\infty ,-\zeta )$ 
and at least one in $(-\zeta ,0)$ -- a contradiction. 

So suppose that possibility {\rm b)}  takes place.  
In this case one must have $x_o<\zeta <x_e$. Without loss of generality one 
can assume that $\xi _1=1$; this can be achieved by a rescaling  
$x\mapsto \alpha x$ with $\alpha >0$.  
Hence $P_o(1)=\beta >0$ and $P_e(1)=-\beta$. Considering the polynomial $P/\beta$ instead of $P,$  
 one can assume that $\beta =1$. Lemma~\ref{lmlm}  below immediately implies that there are no real roots of $P$ larger than $1$ which 
is a contradiction finishing the proof of Part (i). 

\begin{lm}\label{lmlm}
Under the above assumptions,   $P^{(m)}(1)>0$, for any $m=1, 2, \ldots, d$.
\end{lm}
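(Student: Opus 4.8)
The plan is to analyze the situation as set up right before the lemma: we have $\xi_1 = 1$ is the smallest positive root of $P$, the odd part satisfies $P_o(1) = 1$ and the even part satisfies $P_e(1) = -1$, and we are in case b), so $x_o < 1 < x_e$. Recall the sign pattern $\sigma_k$: the coefficients $a_0, a_1$ are positive, then $k$ pairs $(-,+)$, then $d - 2k - 1$ minuses. First I would record precisely which coefficients $a_j$ are positive and which are negative, and translate this into sign information on the coefficients of $P_e$ and $P_o$ separately. The key structural fact I would extract is that, apart from the two leading positive coefficients, once the signs ``turn negative'' they stay negative; so in each of $P_e$ and $P_o$ there is exactly one sign change, occurring early, and all sufficiently ``low-order'' coefficients are negative. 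Since $P_o$ is increasing and changes sign at $x_o < 1$, and $P_e$ changes sign at $x_e > 1$, we get $P_o(1) > 0$ and $P_e(1) < 0$ as given, but moreover I expect to be able to say something about the derivatives of $P_e$ and $P_o$ at $1$: namely $P_o'(1) > 0$ (since $P_o$ is increasing past its unique positive root) and, crucially, all higher derivatives of $P_o$ at $1$ should be controllable.

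**The main computation.** The strategy for proving $P^{(m)}(1) > 0$ for all $m = 1, \dots, d$ is an inductive/telescoping argument going from high $m$ down to low $m$, or equivalently exploiting the Taylor expansion of $P$ at $x = 1$. Write $P(x) = \sum_{m=0}^d \frac{P^{(m)}(1)}{m!}(x-1)^m$; the claim $P^{(m)}(1) > 0$ for $m \geq 1$ together with $P(1) = 0$ would force $P(x) > 0$ for all $x > 1$, which is exactly the contradiction we want (it kills roots $\xi_2, \xi_3$). So Lemma~\ref{lmlm} is the engine. To prove it I would split $P = P_o + P_e$ and handle the two parts. For $P_o$: its coefficients $a_{2\nu}$ have exactly one sign change (plus then minus), and since $x_o < 1$, all the ``mass'' beyond the root is on the increasing side; I would argue by a lemma of the type ``if a polynomial with a single sign change in its coefficients has its positive root strictly below $1$, then all its derivatives at $1$ are positive,'' which itself follows by induction on degree (the derivative of such a polynomial again has a single sign change and its root is $\leq$ the original root $< 1$). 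The delicate part is $P_e$: here $P_e(1) = -1 < 0$ and $x_e > 1$, so the low derivatives of $P_e$ at $1$ may well be negative, and we must show that adding $P_o$ overwhelms them.

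**The crux.** The hard part will be controlling $P_e$ near $x = 1$ and showing $P_o$ dominates. The tool I would reach for is the normalization $\beta = 1$ combined with the factorization structure: since $x_e > 1$ is the unique positive root of $P_e$ and $P_e$ is negative on $[0, x_e)$, we can write $P_e(x) = (x - x_e) Q(x)$ where $Q$ inherits a sign-pattern with few sign changes, and similarly $P_o(x) = (x - x_o) R(x)$. Then $P^{(m)}(1)$ is a sum of products via the Leibniz rule, and I would try to show termwise — or by a more clever grouping — that the $P_o$-contribution beats the $P_e$-contribution at $x = 1$, using $1 - x_o > 0$ versus $1 - x_e < 0$ and the normalization $|P_e(1)| = |P_o(1)| = 1$. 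An alternative, possibly cleaner route: prove the stronger statement that $P_o(x) + P_e(x) > 0$ for $x > 1$ directly, by showing $P_o$ grows faster than $|P_e|$ decreases — i.e. compare $P_o'$ and $P_e'$ on $(1,\infty)$ and integrate from $1$. I anticipate that the genuine obstacle is bookkeeping the signs of the coefficients of $P_e$ and $P_o$ correctly as functions of $k$ and $d$ (the ``$-,+$'' pairs interleave between even and odd positions depending on parity), and then finding the right monotone quantity; once the correct auxiliary lemma about ``one sign change, root below $1$ $\Rightarrow$ positive derivatives at $1$'' is isolated, the rest should be a short induction.
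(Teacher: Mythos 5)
Your handling of $P_o$ is correct and even a little cleaner than what the paper does for that piece: a polynomial whose coefficient sequence has a single sign change (positive leading coefficient) is negative then positive on $(0,\infty)$, its derivative has the same one-sign-change structure with a strictly smaller positive root, and the induction gives $P_o^{(m)}(1)>0$ for all $m$ once $x_o<1$. The intended use of the lemma via the Taylor expansion of $P$ at $1$ also matches the paper. But the proposal stops exactly where the lemma actually lives. You correctly observe that the low-order derivatives of $P_e$ at $1$ may be negative and that one must show the $P_o$-contribution dominates, and then you only list candidate strategies (Leibniz applied to $P_e=(x-x_e)Q$, $P_o=(x-x_o)R$; or comparing $P_o'$ with $P_e'$ on $(1,\infty)$ and integrating) without executing any. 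Neither sketch obviously closes: the integration route is essentially the case $m=1$ of the very statement to be proved, and the factorization route offers no visible mechanism for converting the single scalar normalization $P_o(1)=1=-P_e(1)$ into a bound on all $d$ derivatives. So the crux is missing.

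The paper supplies exactly the missing mechanism, and it is an extremal (worst-case) argument rather than a factorization or integration argument. Since $\frac{d^m}{dx^m}x^n\big|_{x=1}=n(n-1)\cdots(n-m+1)$ is increasing in $n$, one may, while keeping $P_o(1)$ and $P_e(1)$ fixed, push all the negative coefficient mass of $P_o$ into $x^{d-2}$, all the positive mass of $P_e$ down to $x^{d-2k-1}$ and then to zero, and all the negative mass of $P_e$ up to $x^{d-2k-3}$; each move only decreases every $P^{(m)}(1)$. This reduces the lemma to the three-term polynomial $a_0x^d+a_2x^{d-2}-x^{d-2k-3}$ with $a_0+a_2=1$, for which $P^{(m)}(1)=u_ma_0+v_ma_2-w_m$ with falling factorials $0\le w_m\le v_m<u_m$ --- the structural point being that every negative monomial of $P_e$ has degree strictly below $d-2$ --- whence $P^{(m)}(1)\ge (u_m-v_m)a_0>0$. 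Some quantitative step of this kind, tying the normalization $\beta=1$ to the degree gap between the negative part of $P_e$ and the top of $P_o$, is indispensable; without it your argument does not yet prove the lemma.
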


\begin{proof}[Proof of Lemma~\ref{lmlm}.]
For any $m=1$, $2$, $\ldots$, $d,$ it is true that if the sum of the 
coefficients $\delta :=a_2+a_4+\cdots +a_{d-1}$ is fixed (recall that all these 
coefficients are negative), then $P_o^{(m)}(1)$ is minimal for $a_2=\delta$, 
$a_4=a_6=\cdots =a_{d-1}=0$. Indeed, when taking derivatives and  
computing their values at $x=1$, the monomial with the largest degree in $x$ 
is multiplied by the largest  factor (equal to this degree). Therefore 
in what follows we assume that $a_4=a_6=\cdots =a_{d-1}=0$, and  hence $a_2=1-a_0<0$. 

Similarly, consider  $P_e^{(m)}(1)$. Recall that $a_1>0$, $a_3>0$, $\ldots$, 
$a_{2k+1}>0$, $a_{2k+3}<0$, $a_{2k+5}<0$, $\ldots$, $a_d<0$. Hence for fixed sums 
$\delta _*:=a_1+a_3+\cdots +a_{2k+1}$ and 
$\delta _{**}:=a_{2k+3}+a_{2k+5}+\cdots +a_d$, the value of $P_e^{(m)}(1)$ is 
minimal if 

\begin{equation}\label{eqeq}
\begin{cases}
a_1=\cdots=a_{2k-1}=0~,~a_{2k+1}=\delta _*\\  
a_{2k+5}=\cdots=a_d=0~,~a_{2k+3}=\delta _{**}.
\end{cases}
\end{equation} 

\medskip
Let us now assume that conditions (\ref{eqeq}) are valid. Thus 
$P_e=a_{2k+1}x^{d-2k-1}+a_{2k+3}x^{d-2k-3}$ and $a_{2k+1}+a_{2k+3}=-1$. One can further 
decrease $P_e^{(m)}(1)$ by assuming that $a_{2k+1}=0$, $a_{2k+3}=-1$. Thus 
$P(x)=a_0x^d+a_2x^{d-2}-x^{d-2k-3}~~~{\rm and}~~~a_0+a_2=1~.$

But then $P^{(m)}(x)=u_ma_0x^{d-m}+v_ma_2x^{d-2-m}-w_mx^{d-2k-3-m}$ and 
$P^{(m)}(1)=u_ma_0+v_ma_2-w_m$ for some numbers $0\leq w_m\leq v_m<u_m$. Therefore 
$$\begin{array}{ccccc}
P^{(m)}(1)&=&w_m(a_0+a_2-1)+(v_m-w_m)(a_0+a_2)+(u_m-v_m)a_0&&\\ &=&
(v_m-w_m)(a_0+a_2)+(u_m-v_m)a_0>0~.&&\end{array}$$
  \end{proof}

\medskip 
Proof of Part (ii): 
The polynomial $x^d-1$ has the necessary signs of the leading coefficient and 
of the constant term. It has a single real simple root at $1$. One can 
construct a polynomial of the form 
$S:=x^d-1+\varepsilon \sum _{j=1}^{d-1}c_jx^j$, where $c_j=1$ (resp. $c_j=-1$) if 
the sign at the corresponding position of $\sigma _k$  is  $+$ (resp.  $-$). 
For a small enough $\varepsilon >0$,  the polynomial $S$ has a single simple real root 
  close to $1$, and its coefficients  
have the sign pattern $\sigma$.

Finally, our approach how to settle  Part (iii) is based on the following 
lemma borrowed from \cite{FoKoSh}. For a monic polynomial we might write $1$ instead of the 
first $+$ sign in its sign pattern. Recall that the shortened sign pattern of a monic 
polynomials is what remains from its sign pattern when this initial $1$ is 
deleted.  

\begin{lm}[See Lemma~14 in \cite{FoKoSh}] \label{lmconcat} Suppose that the
monic polynomials $P_1$ and $P_2$ of degrees $d_1$ and $d_2$ with sign
patterns $\bar{\sigma}_1 = (1,\hat{\sigma}_1)$ and 
$\bar{\sigma}_2 = (1,\hat{\sigma}_2)$, 
respectively, realize
the pairs $(pos_1, neg_1)$ and $(pos_2, neg_2)$. 

\medskip
Then

\noindent
{\rm (i)} if the last position of $\hat{\sigma}_1$ is $+$, then for any 
small enough $\varepsilon > 0$, the polynomial $\varepsilon ^{d_2}P_1(x)P_2(x/\varepsilon )$ 
realizes the sign pattern $(1,\hat{\sigma}_1,\hat{\sigma}_2)$ and the pair 
$(pos_1+pos_2, neg_1+neg_2)$.

\noindent
{\rm (ii)} if the last position of $\hat{\sigma}_1$ is $-$, then for any 
$\varepsilon > 0$
small enough, the polynomial $\varepsilon ^{d_2}P_1(x)P_2(x/\varepsilon )$ 
realizes
the sign pattern $(1,\hat{\sigma}_1,−\hat{\sigma}_2)$ and the pair 
$(pos_1+pos_2, neg_1+neg_2)$. (Here $-\hat{\sigma}$ is the sign pattern obtained
from $\hat{\sigma}$ by changing each $+$ by $-$ and vice versa.)
\end{lm}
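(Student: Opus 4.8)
The plan is to work directly with the product $Q_\varepsilon(x):=\varepsilon^{d_2}P_1(x)P_2(x/\varepsilon)$ and to separate the verification into two independent tasks: counting its positive and negative roots, and determining its sign pattern in the limit $\varepsilon\to 0^+$. The first step is to record that $\tilde{P}_2(x):=\varepsilon^{d_2}P_2(x/\varepsilon)$ is again monic of degree $d_2$. Indeed, writing $P_2(x)=\sum_{j=0}^{d_2}b_jx^{d_2-j}$ with $b_0=1$, one gets $\tilde{P}_2(x)=\sum_{j=0}^{d_2}b_j\varepsilon^{j}x^{d_2-j}$, so each coefficient $b_j\varepsilon^{j}$ has exactly the sign of $b_j$; hence $\tilde{P}_2$ carries the same sign pattern $\bar{\sigma}_2$ as $P_2$ for every $\varepsilon>0$, and $Q_\varepsilon=P_1\cdot\tilde{P}_2$ is monic of degree $d_1+d_2$.

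For the root count, I would observe that the roots of $\tilde{P}_2$ are precisely $\varepsilon$ times the roots of $P_2$, with unchanged multiplicities, and that multiplication by $\varepsilon>0$ preserves the sign of each root. Since the roots of $Q_\varepsilon$ form the multiset union of the roots of $P_1$ and those of $\tilde{P}_2$, the numbers of positive and of negative roots simply add, yielding the pair $(pos_1+pos_2,\,neg_1+neg_2)$. This part is exact for every $\varepsilon>0$; smallness of $\varepsilon$ will only be needed to control the sign pattern.

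The heart of the argument is the coefficient analysis. With $P_1(x)=\sum_{i=0}^{d_1}a_ix^{d_1-i}$, $a_0=1$, the coefficient of $x^{d_1+d_2-m}$ in $Q_\varepsilon$ is $c_m=\sum_{i+j=m}a_ib_j\varepsilon^{j}$, whose lowest power of $\varepsilon$ dictates its sign for small $\varepsilon$. For $0\le m\le d_1$ the minimal admissible exponent is $j=0$ (taken at $i=m$), so $c_m=a_m+O(\varepsilon)$ and $\mathrm{sign}\,c_m=\mathrm{sign}\,a_m$; these top $d_1+1$ coefficients reproduce $(1,\hat{\sigma}_1)$. For $d_1<m\le d_1+d_2$ the minimal exponent is $j=m-d_1$ (taken at $i=d_1$), so $c_m=a_{d_1}b_{m-d_1}\varepsilon^{m-d_1}(1+O(\varepsilon))$ and $\mathrm{sign}\,c_m=\mathrm{sign}(a_{d_1})\cdot\mathrm{sign}(b_{m-d_1})$; as $m-d_1$ runs through $1,\ldots,d_2$ these bottom $d_2$ coefficients reproduce $\hat{\sigma}_2$ multiplied throughout by the sign of the constant term $a_{d_1}$ of $P_1$, which is exactly the last entry of $\hat{\sigma}_1$. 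Because there are only finitely many coefficients, taking $\varepsilon$ below the finitely many thresholds makes all of these signs correct at once.

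The case distinction then falls out: if the last position of $\hat{\sigma}_1$ is $+$, then $a_{d_1}>0$ and the lower block is $\hat{\sigma}_2$, giving $(1,\hat{\sigma}_1,\hat{\sigma}_2)$ as in part (i); if it is $-$, then $a_{d_1}<0$ and the lower block is $-\hat{\sigma}_2$, giving $(1,\hat{\sigma}_1,-\hat{\sigma}_2)$ as in part (ii). I expect the main obstacle to be the careful bookkeeping of the two regimes $m\le d_1$ and $m>d_1$, that is, isolating the single dominant monomial in each $c_m$ and tracking how the sign of the constant term of $P_1$ governs the entire lower block; this last point is precisely the mechanism that produces the sign flip in (ii). The uniformity over the finitely many coefficients is routine once these dominant terms have been identified.
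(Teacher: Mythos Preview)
Your proof is correct. The paper does not actually prove this lemma: it is quoted verbatim from \cite{FoKoSh} (as Lemma~14 there) and used as a black box, so there is no argument in the present paper to compare against. Your approach---expanding the coefficient of $x^{d_1+d_2-m}$ in $Q_\varepsilon$ as $\sum_{i+j=m}a_ib_j\varepsilon^j$ and reading off the dominant term according to whether $m\le d_1$ or $m>d_1$---is exactly the standard one, and the root count via rescaling is immediate.
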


\begin{rem}\label{remconcat}
{\rm Example~15 in \cite{FoKoSh} explains some of the 
possible applications  of Lemma~\ref{lmconcat}. We present and extend 
this example below. 
If} 
$$P_2=x-1~,~x+1~,~x^2+2x+2~,~x^2+2x+0.5~,~x^2-2x+2~~~{\rm or}~~~x^2-2x+0.5~,$$ 
{\rm then $(pos_2,neg_2)=(1,0)$, $(0,1)$, $(0,0)$, $(0,2)$, $(0,0)$ and $(2,0)$ 
respectively. Denote 
by $\tau$ the last entry of $\hat{\sigma}_1$. When $\tau =+$, then one has 
respectively $\hat{\sigma}_2=(-)$, $(+)$, $(+,+)$, $(+,+)$, $(-,+)$ and 
$(-,+)$ and the 
sign pattern of $\varepsilon ^{d_2}P_1(x)P_2(x/\varepsilon )$ equals} 
$$(1,\hat{\sigma}_1,-)~,~(1,\hat{\sigma}_1,+)~,~(1,\hat{\sigma}_1,+,+)
~,~(1,\hat{\sigma}_1,+,+)~,~(1,\hat{\sigma}_1,-,+)~~~
{\rm or}~~~(1,\hat{\sigma}_1,-,+)~.$$
{\rm If $\tau =-$, then $\hat{\sigma}_2=(+)$, $(-)$, $(-,-)$, $(-,-)$, 
$(+,-)$ and $(+,-)$ 
and the 
sign pattern of $\varepsilon ^{d_2}P_1(x)P_2(x/\varepsilon )$ equals}
$$(1,\hat{\sigma}_1,+)~,~(1,\hat{\sigma}_1,-)~,~(1,\hat{\sigma}_1,-,-)~,~
(1,\hat{\sigma}_1,-,-)~,~(1,\hat{\sigma}_1,+,-)~~~
{\rm or}~~~(1,\hat{\sigma}_1,+,-)~.$$
\end{rem}

Proof of Part (iii): 
Recall that the sign pattern $\sigma _k$ ends with $d-2k-1$ minuses. Set 
$\sigma _k=(+,+,\sigma ^*,\sigma ^{\dagger})$, where the sign patterns 
$\sigma ^*$ (resp. $\sigma ^{\dagger}$) consist of a minus followed by 
$k$ pairs $(+,-)$  
(resp. of $d-2k-2$ minuses). 

The sign pattern $(+,+)$ is realizable by the polynomial $x+1$ (hence 
with the pair $(0,1)$). To obtain a 
polynomial realizing the sign pattern $(+,+,\sigma ^*)$ with the pair 
$(2\ell +1,1)$ one applies Lemma~\ref{lmconcat}, first $k-\ell$ times with 
$P_2=x^2-2x+2$, and then $2\ell +1$ times with $P_2=x-1$. 
After this one applies 
Lemma~\ref{lmconcat}, first $2r-1$ times with $P_2=x+1$, and then 
$(d-2k-1)/2-r$ times with 
$P_2=x^2+2x+2$ to realize the sign pattern $\sigma _k$ with the pair 
$(2\ell +1,2r)$.
\end{proof}

\section{Discriminant loci of cubic and quartic polynomials under a microscope}\label{sec:zoo}

The goal of this  section is mainly pedagogical.   
For the convenience of our readers, we present below detailed descriptions and illustrations of  cases of (non)realizability of 
sign patterns and admissible pairs for polynomials of degree up to $4$.  

Define  the {\it standard real discriminant locus}   $\D_d\subset Pol_d$ as the subset of all polynomials having a real multiple root.  (Detailed information about a natural stratification of $\D_d$ can be found in e.g., \cite {KhS}.)  It is a  well-known and simple fact that $Pol_d\setminus \D_d$ consists of $\left[\frac{d}{2}\right]+1$ components distinguished  by the number of real  simple roots. Moreover, each such component is contractible in $Pol_d$. Obviously, the number of real roots in a family of monic polynomials changes if and only if  this family crosses the discriminant locus $\D_d$.

\begin{figure}[htbp]
\centerline{\hbox{\includegraphics[scale=0.5]{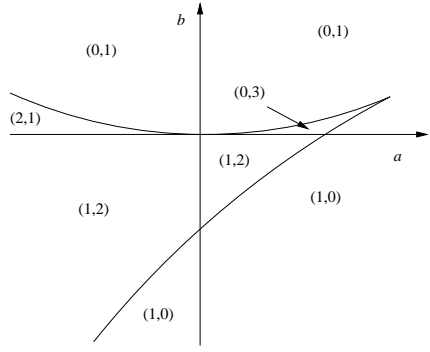}}}
    \caption{The discriminant locus of the family $x^3+x^2+ax+b$.}
\label{n=3B}
\end{figure}

\subsection {Degrees 1 and 2}  Clearly,  a polynomial $x+u$ has a single real root $-u$ whose sign 
is opposite to the sign of the constant term. For degrees $2$, $3$ 
and $4$ we will use the invariance of the zero set of the family of 
polynomials $x^n+a_1x^{n-1}+\cdots +a_n$ with respect to  the group of 
quasi-homogeneous dilatations $x\mapsto tx$, $a_j\mapsto t^ja_j$, 
to set the subdominant coefficient to $1$. Thus for $n=2$,  
we consider the family $P_2:=x^2+x+a$.  For 
$a\leq 1/4$, it has two real roots; for $a<1/4$, these are distinct. For $a\in (0,1/4)$,  
they are both negative while for $a<0$, they are of opposite signs. 

\subsection {Degree 3} For $n=3,$ we consider the family $P_3:=x^3+x^2+ax+b$. 
Its discriminant locus $\Sigma$ 
is defined by the equation $4a^3-a^2+4b-18ab+27b^2=0$. This is a curve shown in Fig.~\ref{n=3B}. 
It has an ordinary cusp for $(a,b)=(1/3,1/27)$ and an ordinary 
tangency to the 
$a$-axis at the origin.   In the eight regions of the complement to its union with the  
coordinate axes, the polynomial has roots as indicated in 
Fig.~\ref{n=3B}.  (Here $(0,1)$ means $0$ positive and $1$ negative real roots 
hence there exists a complex conjugate pair as well.) The point of the cusp corresponds to a 
triple root at $-1/3$, the upper arc corresponds to the case 
of one double real root to the right and a simple one to the left 
(and vice versa for the lower arc).

\begin{figure}[htbp]
\vskip0.5cm
\centerline{\hbox{\includegraphics[scale=0.2]{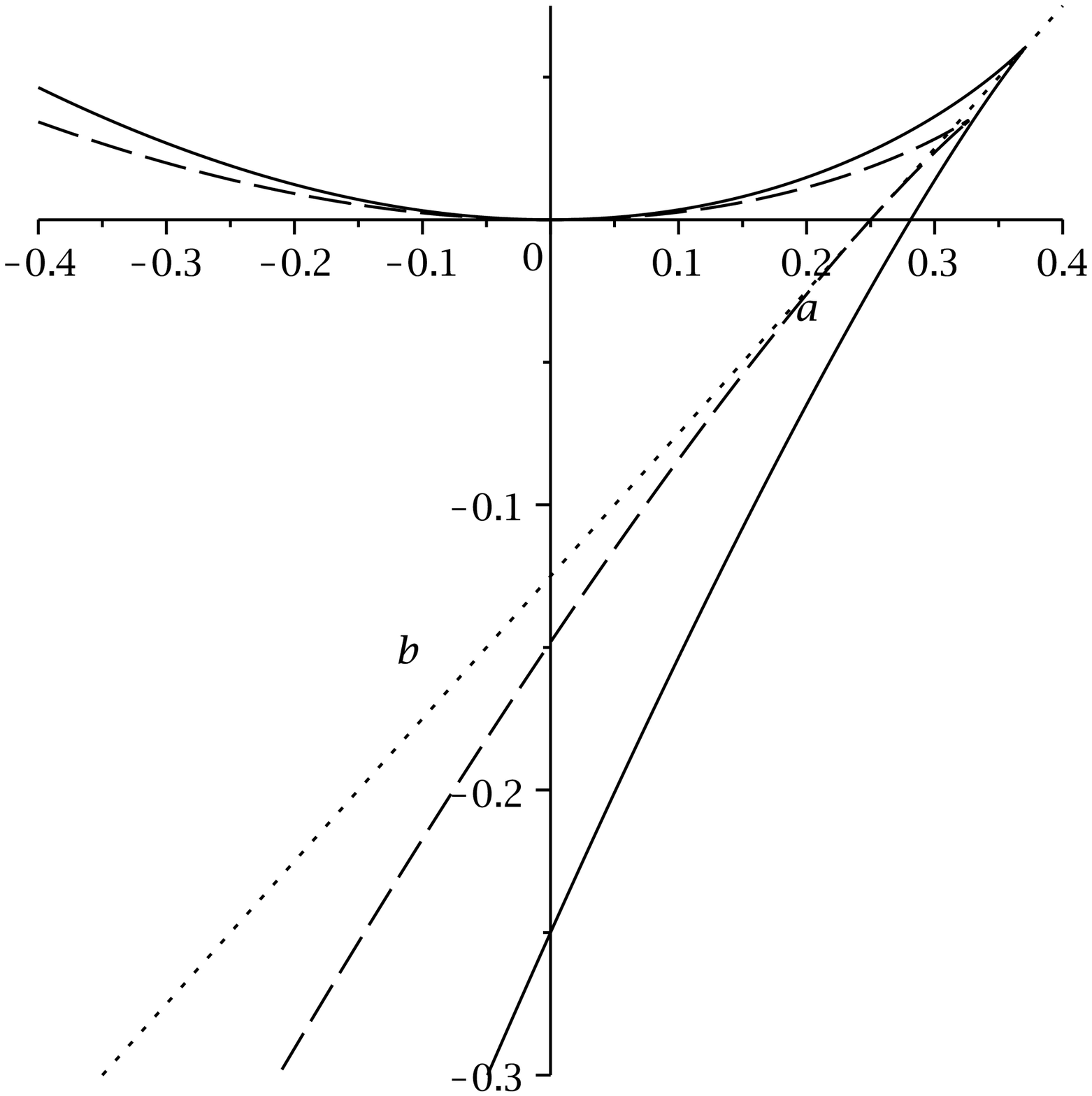}}\hskip1cm \hbox{\includegraphics[scale=0.2]{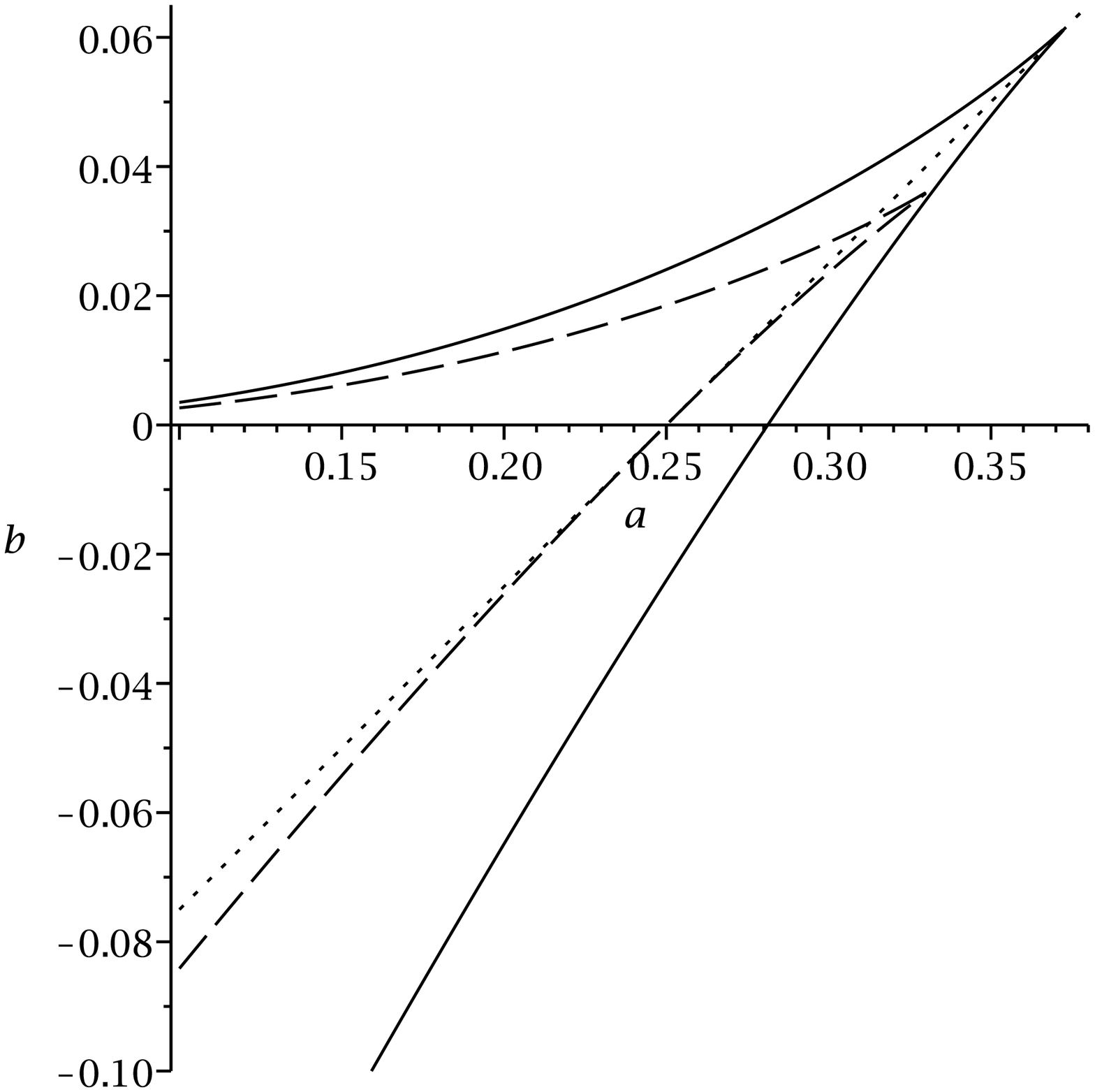}}}
    \caption{The projection  of the 
discriminant locus of $x^4+x^3+ax^2+bx+c$ to the plane of parameters  $(a,b)$. (Picture on the right shows the enlarged portion of the projection near the cusp point.)}
\label{n=4B1}
\end{figure}

\subsection{Degree 4} For $n=4,$ we consider the family $P_4:=x^4+x^3+ax^2+bx+c$.  In 
Fig.~\ref{n=4B1}  we show the projection $\tilde{\Phi}$ 
of its discriminant locus $\Phi$  
in the $(a,b)$-plane. (For the other sets their projections 
in $(a,b)$ 
are denoted by the same letters with tilde.) 
By the dashed line we show the set $\Sigma$  for the family $P_3$. 
One has $$\Phi \cap \{ c=0\} =\Sigma \cup \{ b=c=0\}.$$ 
By the solid line we represent the 
projection 

$$\tilde{\Lambda}~:~64a^3-18a^2+54b-216ab+216b^2=0$$ 
of the subset $\Lambda \subset \Phi$ for which the 
polynomial $P_4$ has a real root of multiplicity at least $3$. 
The ordinary cusp 
point of $\tilde{\Lambda}$ is the projection  of the point 
$(3/8,1/16,1/256)$ which defines the polynomial $x^4+x^3+3x^2/8+x/16+1/256=(x+1/4)^4~$ 
to the plane  $(a,b)$. 

\begin{figure}[htbp]
\centerline{\hbox{\includegraphics[scale=0.2]{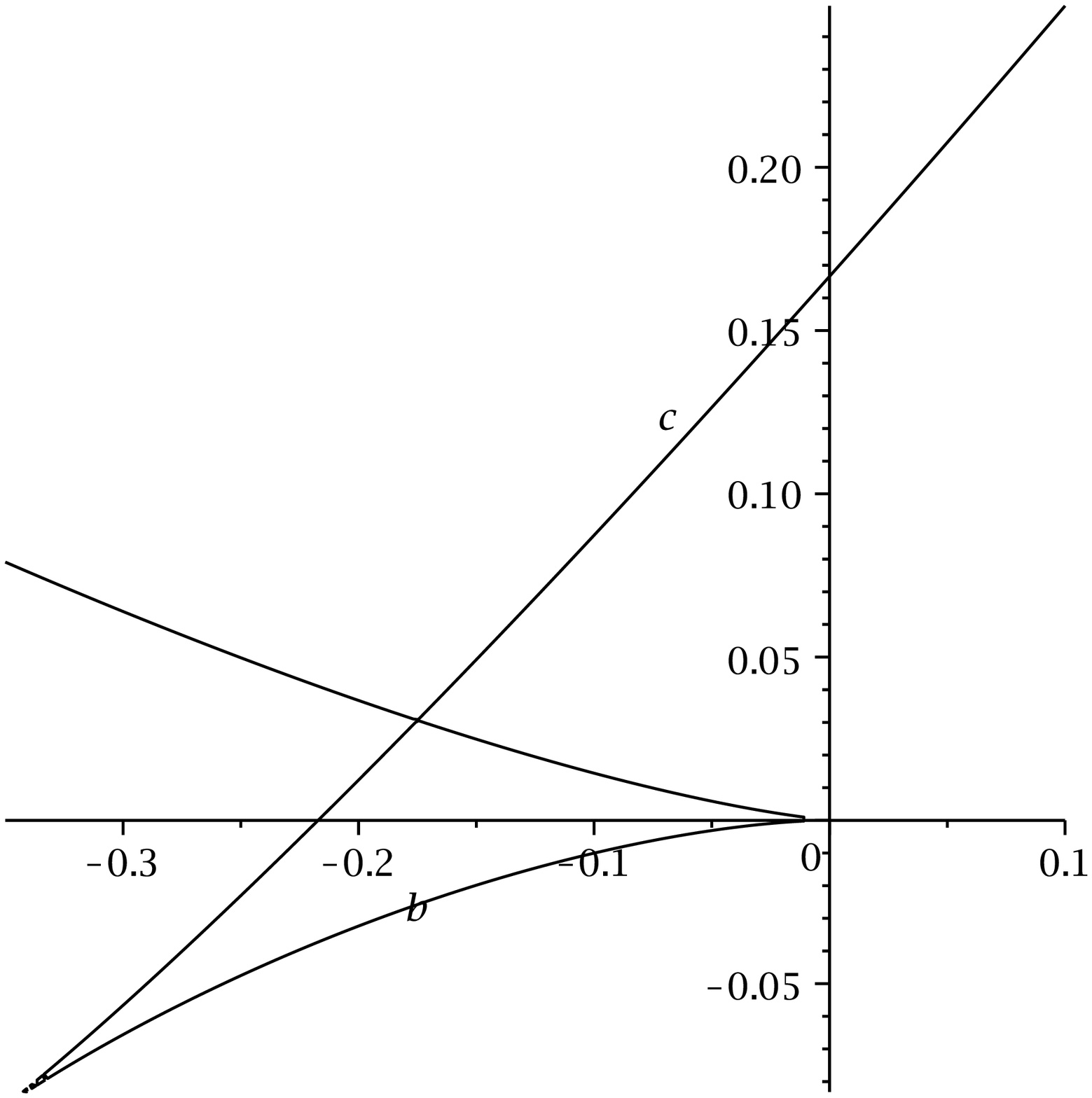}}\hskip1cm \hbox{\includegraphics[scale=0.2]{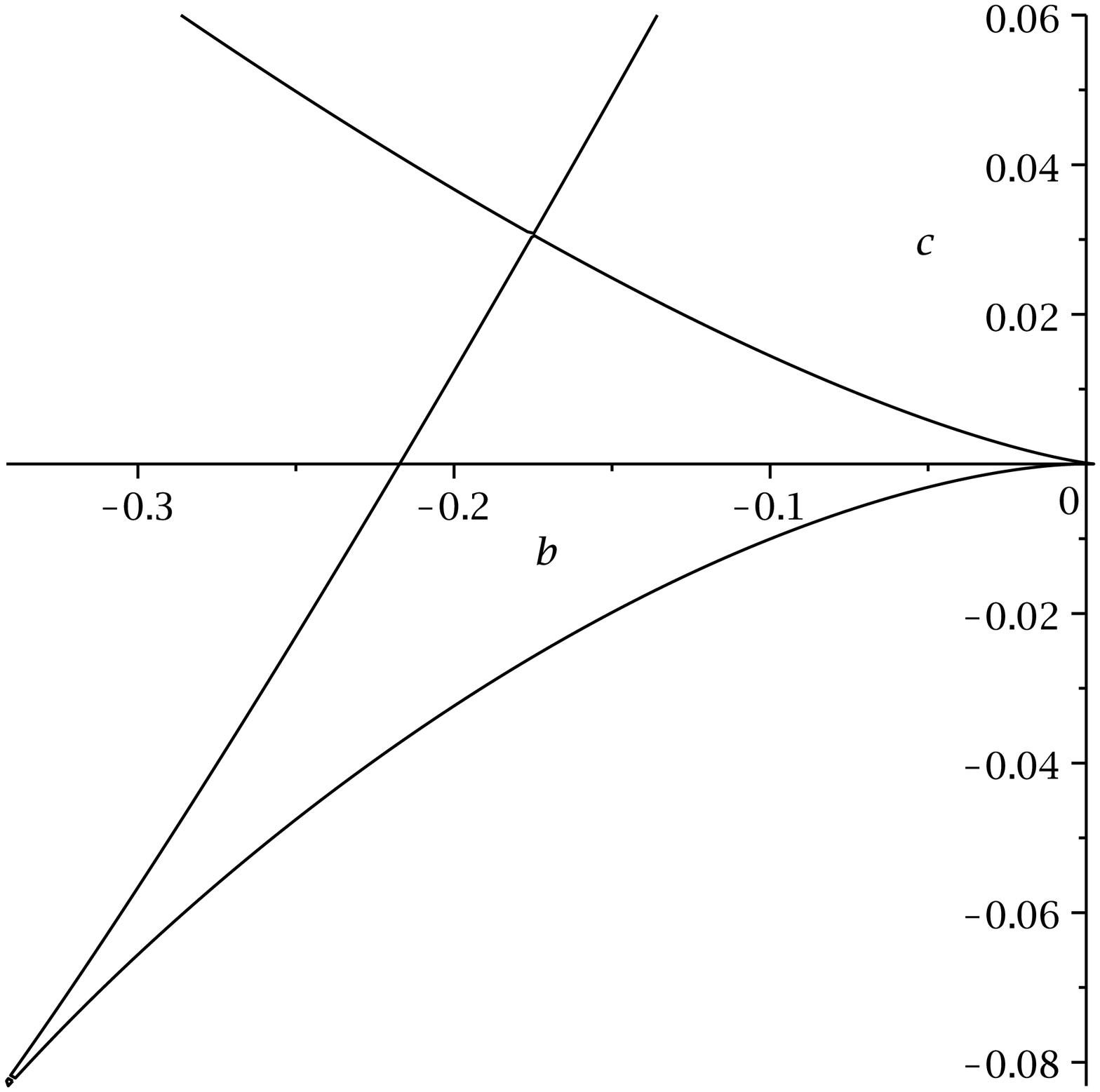}}}
\vskip1cm
\centerline{\hbox{\includegraphics[scale=0.2]{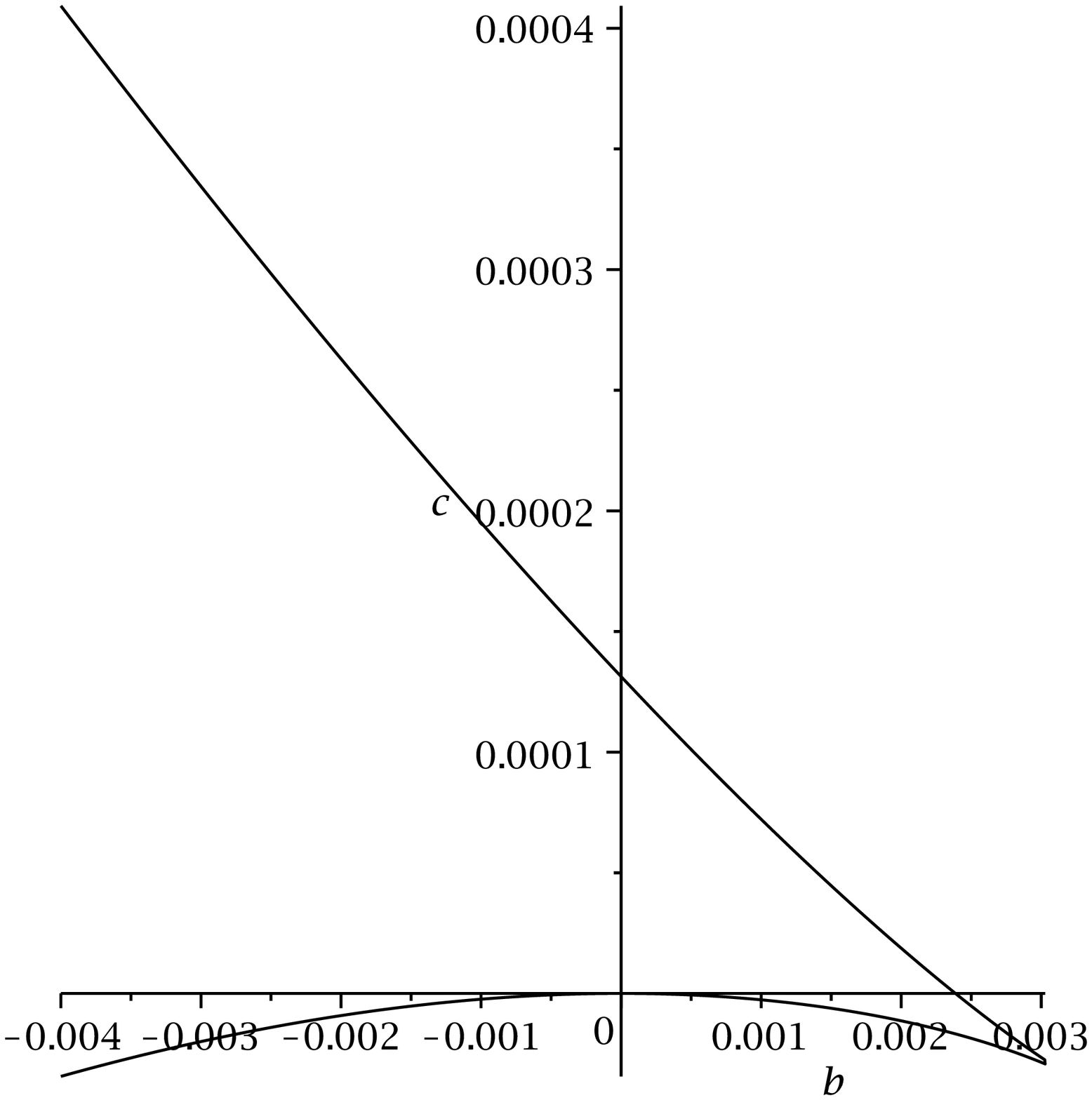}}\hskip1cm \hbox{\includegraphics[scale=0.2]{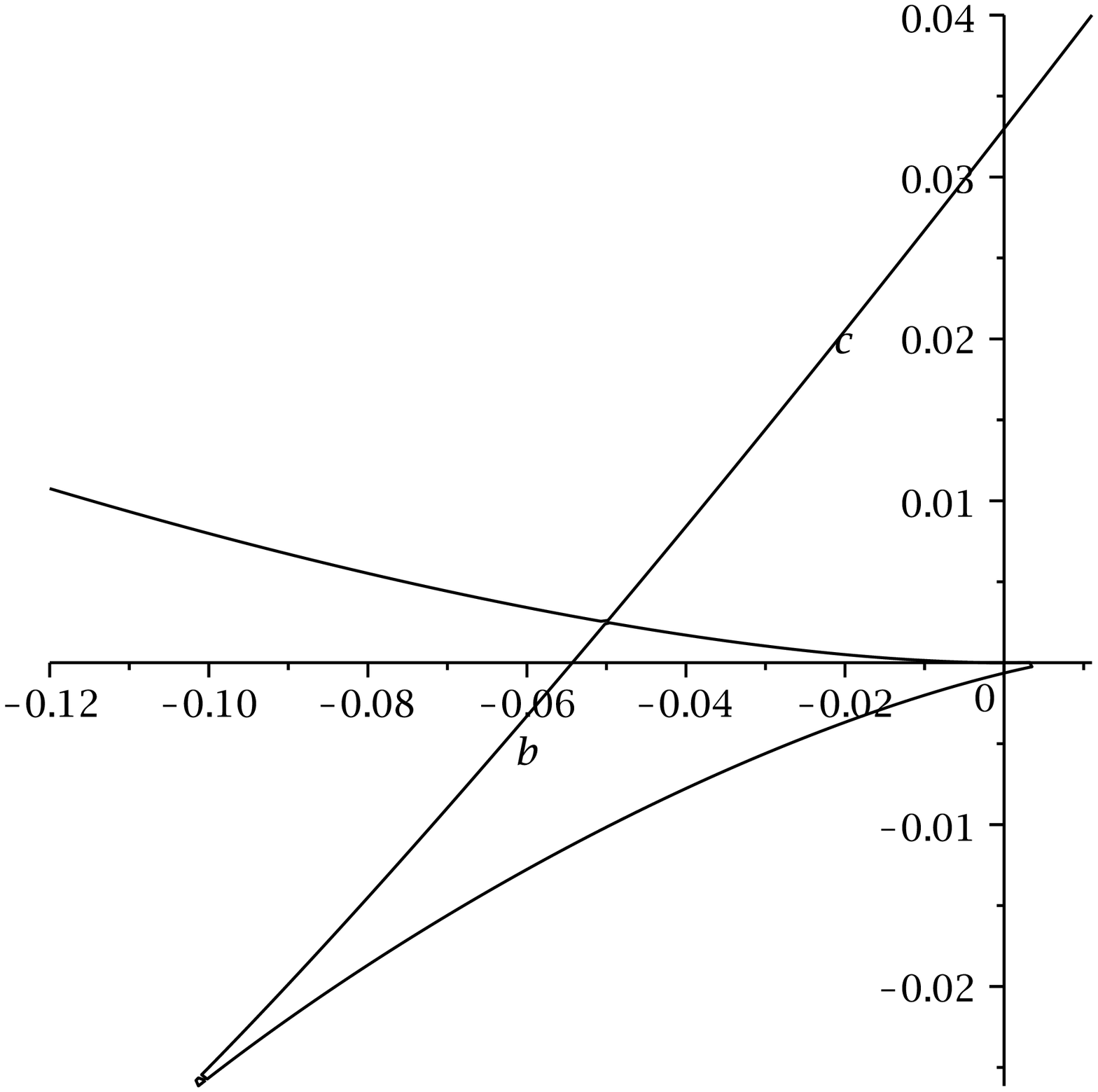}}}
\vskip1cm
\centerline{\hbox{\includegraphics[scale=0.2]{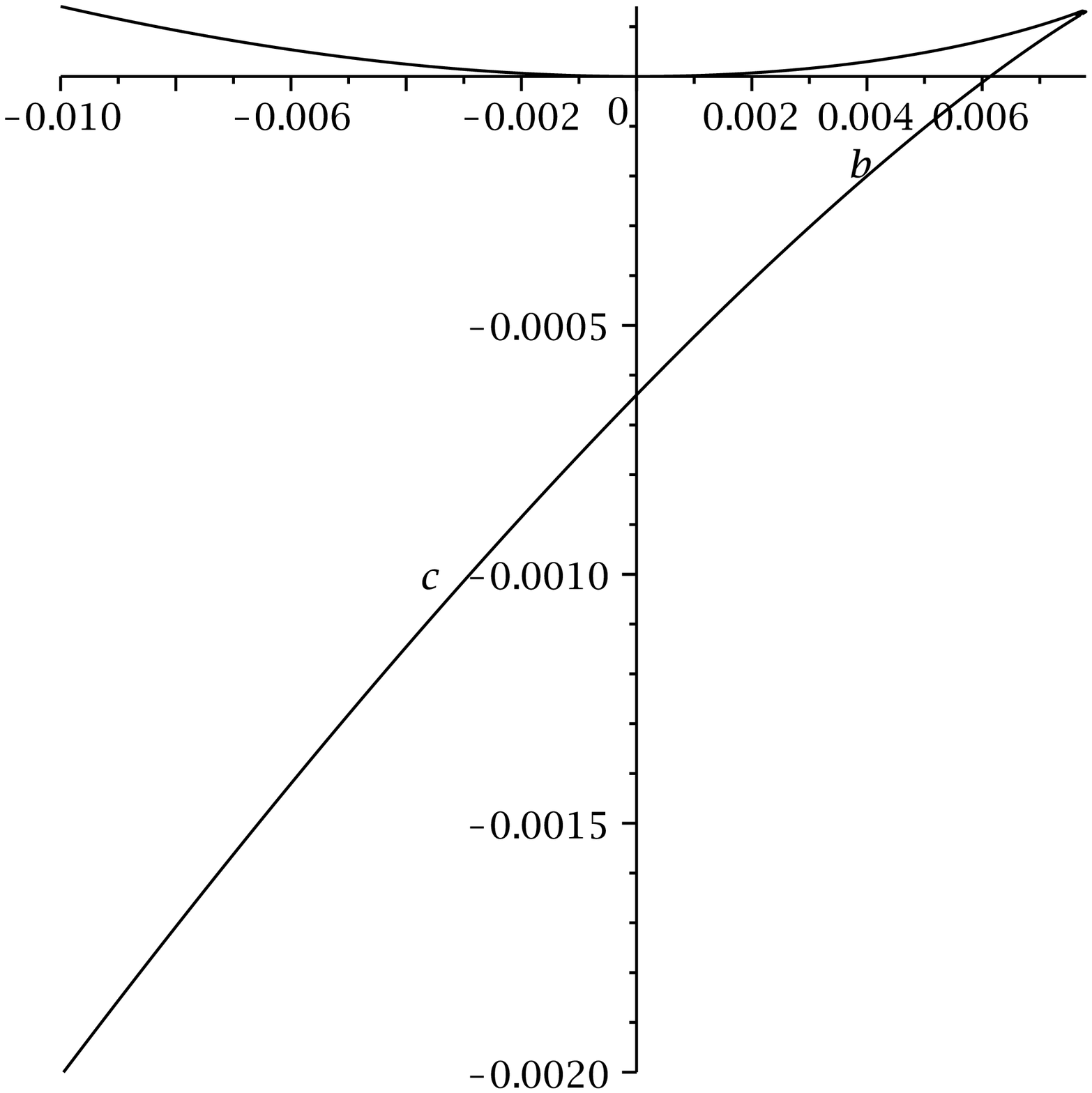}}\hskip1cm \hbox{\includegraphics[scale=0.2]{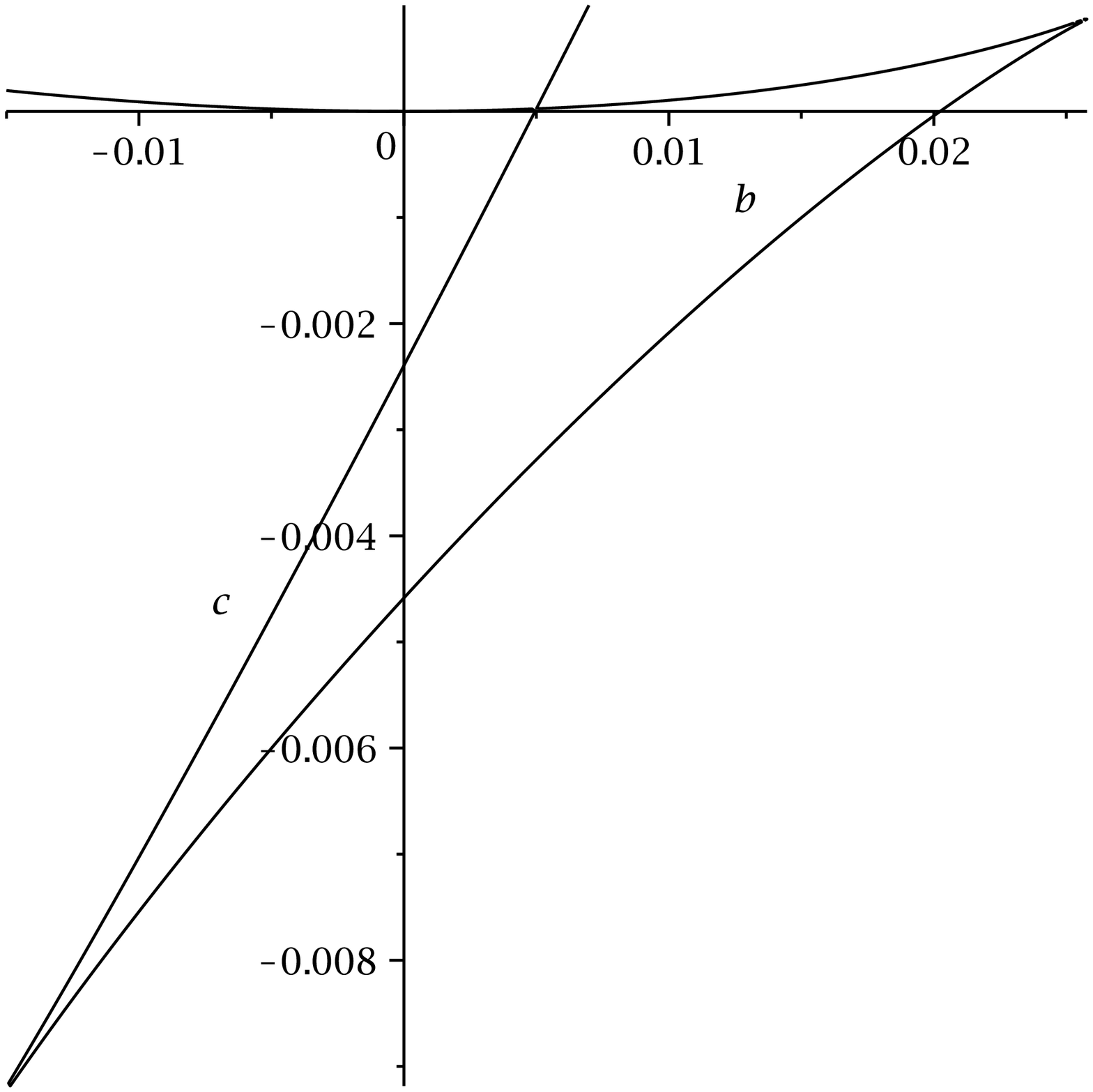}}}
\vskip1cm
    \caption{Intersections of the 
discriminant locus of $x^4+x^3+ax^2+bx+c$ with the planes $a=-0.1$ (the first three pictures); $a=0.15$ (the fourth and the fifth pictures); and $a= 0.26$ (the last picture).}
\label{n=4Bsecm01}
\end{figure}


At this point the set $\Phi$ has a swallowtail singularity, see e.g. \cite{Ar}.  
On the upper arc of $\Lambda$ the polynomial $P_4$ has one triple root to the 
right and a simple one to the left (and vice versa for the lower arc). 
The upper arc of $\tilde{\Lambda}$ has an ordinary tangency 
to the $a$-axis at the origin. 
Along the curve $\Lambda$ the intersections of the hypersurface $\Phi$ with 
planes transversal to $\Lambda$ have cusp points. 
 
The cusp point of $\Sigma$ belongs to $\Lambda$. At this point $\Lambda$ 
intersects the $(a,b)$-plane. The tangent line 
$\tilde{L}:b=a/2-1/8$ to $\tilde{\Lambda}$ at its cusp at $(3/8,1/16)$ 
 is tangent to the curve $\Sigma$ at 
$(1/4,0)$. ($\tilde{L}$ is shown by the dotted line.) The set $L$ corresponds to polynomials having two double roots. 
For $a<3/8,$ these roots are real, and for $a>3/8,$ they are complex conjugate. 
The curve $L$ is tangent to the $(a,b)$-plane at the point $(1/4,0,0)$. It 
belongs to the half-space $\{ c\geq 0\}$. 

Now we consider the intersections of $\Phi$ with the planes parallel to the 
$(b,c)$-plane. For $a<3/8,$ they  have two ordinary cusps (which are 
the points of $\Lambda$) and a transversal self-intersection point (which belongs to $L$). The first three pictures in Fig.~\ref{n=4Bsecm01}  
show this intersection with 
the plane $a=-0.1$ in different scales. The curves are tangent to the $a$-axis. Inside the 
curvilinear triangle (denoted by $H_4$) 
the polynomial has four distinct real roots. In the domain $H_2$ 
which surrounds $H_4$, the polynomial $P_4$ has two distinct real roots 
and a complex conjugate pair. 
In the domain $H_0$ above the self-intersection point it has two complex conjugate 
pairs. These domains are defined in the same way for all $a<3/8$. For 
$a>3/8$, the domain $H_4$ does not exist.

\begin{figure}[htbp]
\centerline{\hbox{\includegraphics[scale=0.2]{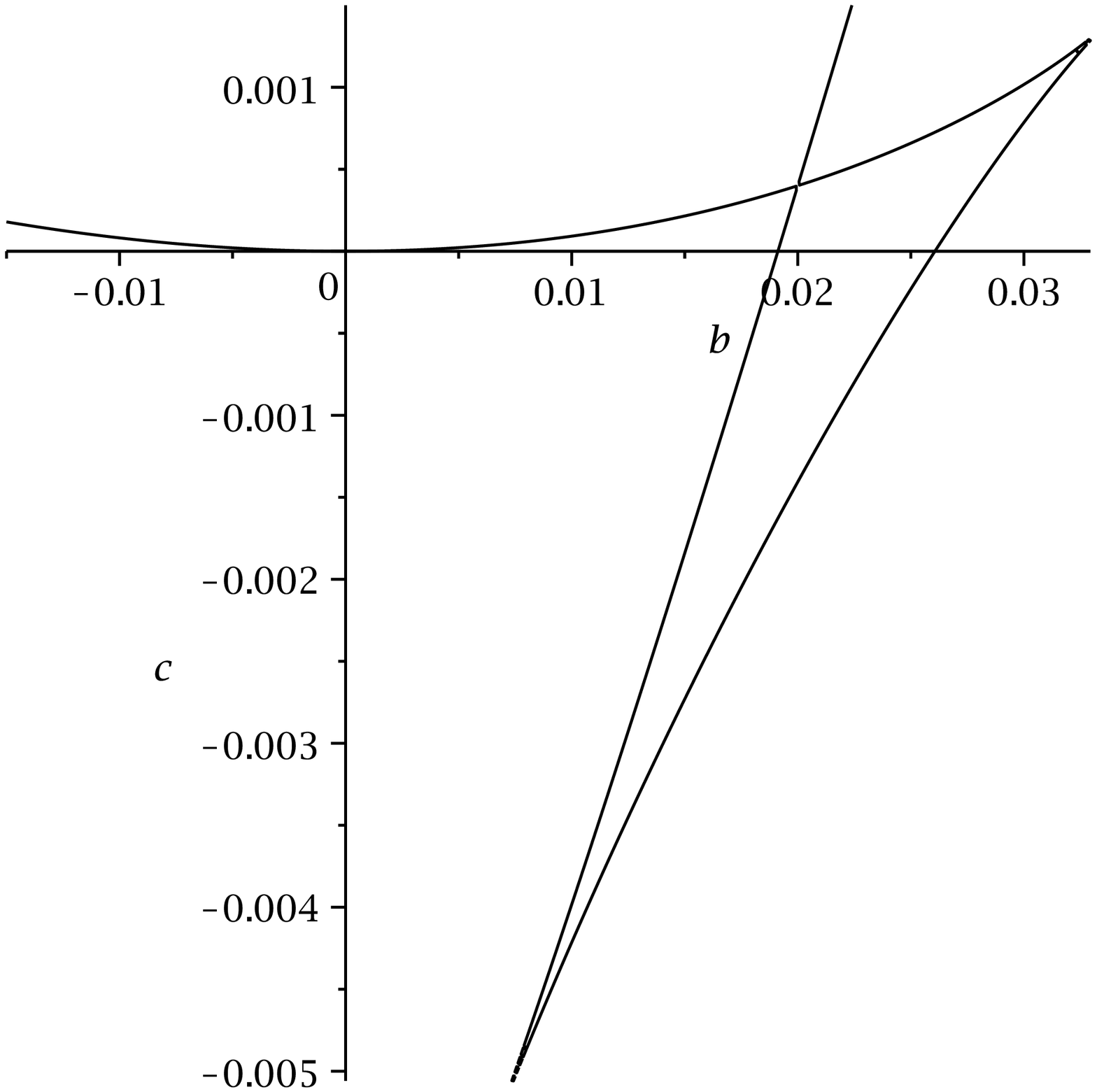}}\hskip1cm\hbox{\includegraphics[scale=0.2]{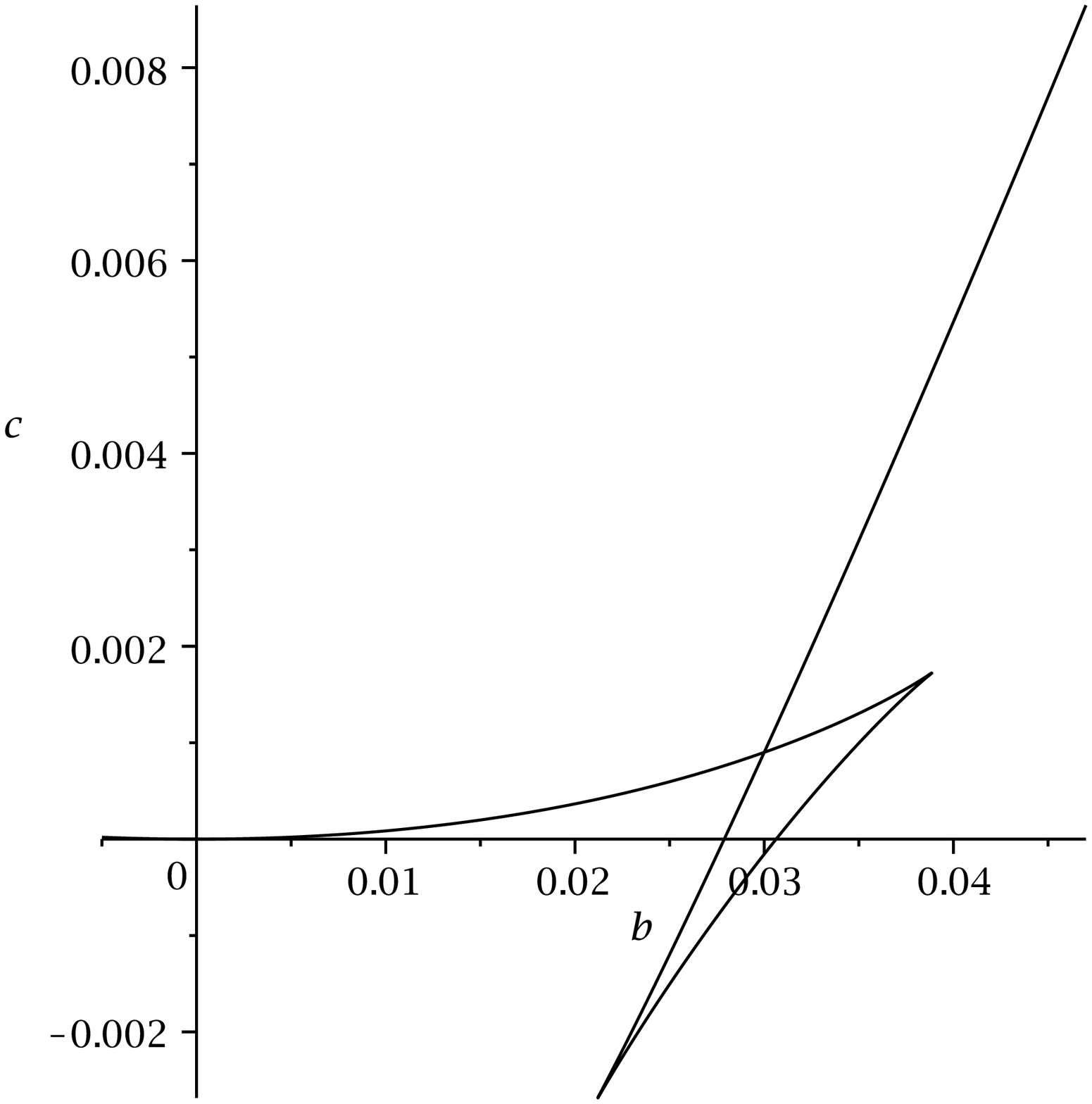}}}
\vskip1cm
\centerline{\hbox{\includegraphics[scale=0.2]{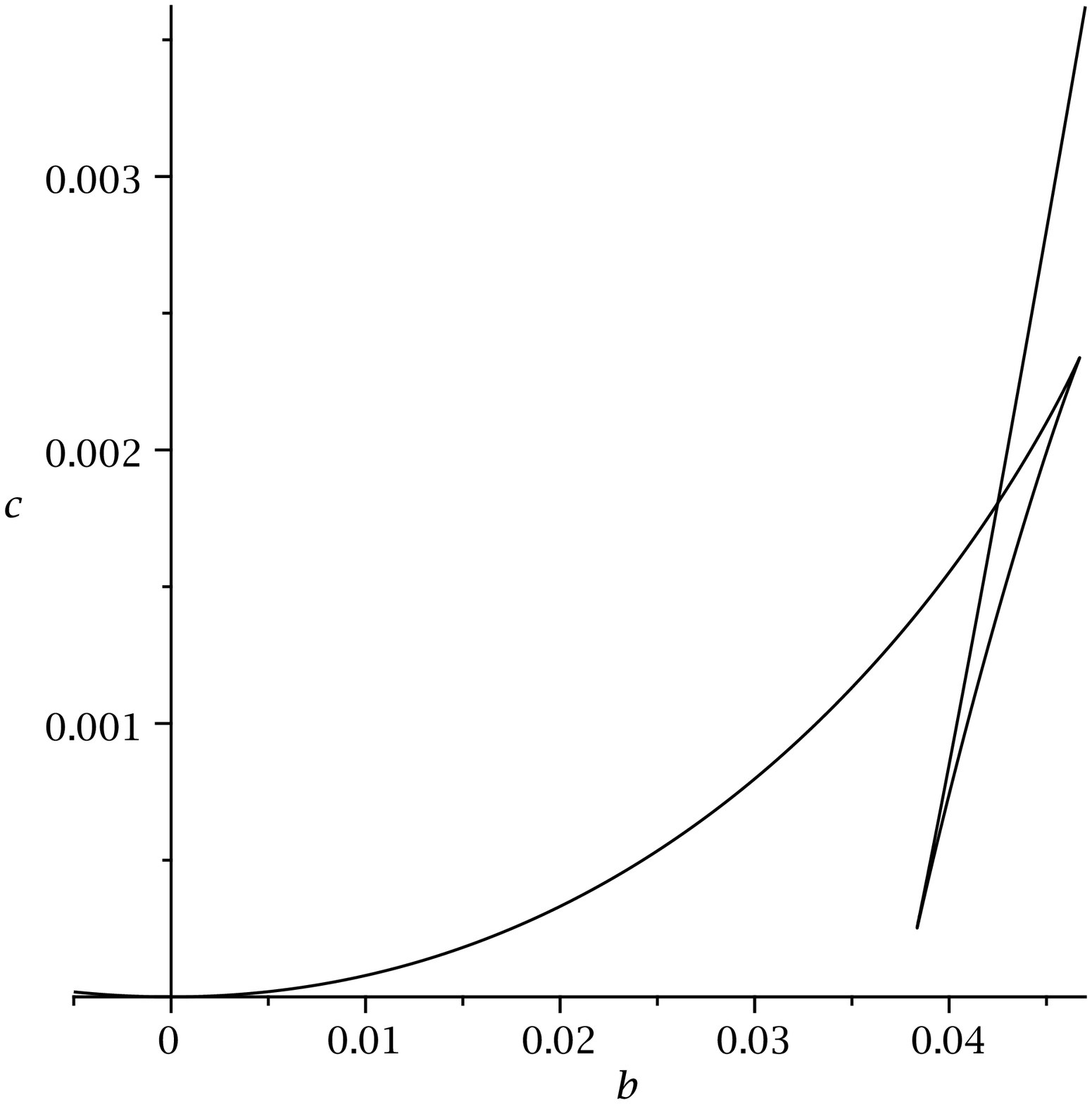}}\hskip1cm \hbox{\includegraphics[scale=0.2]{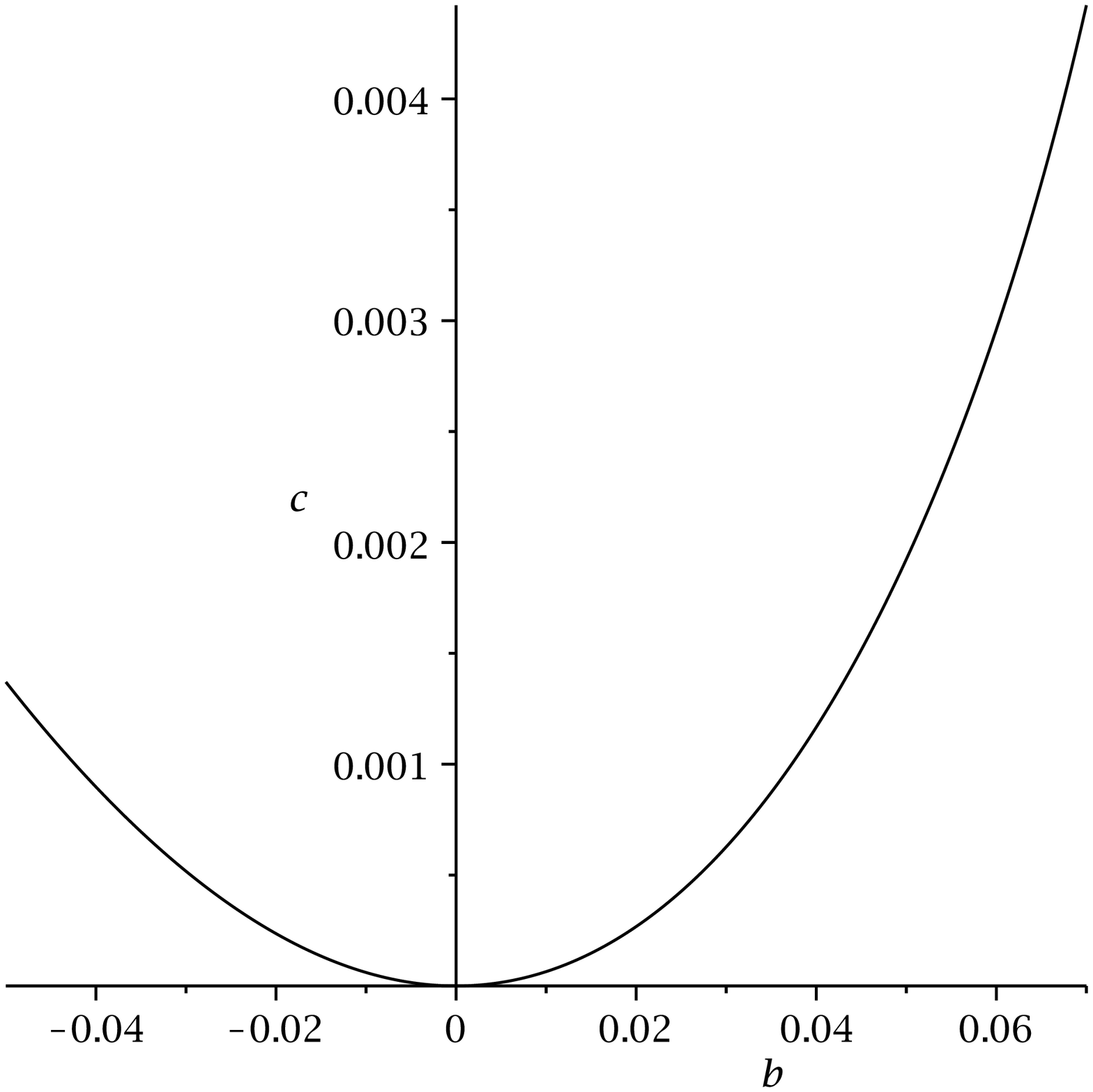}}}
\vskip0.5cm
    \caption{The intersection of the 
discriminant locus of $x^4+x^3+ax^2+bx+c$ with the planes $a=0.29; 0.31; 0.335; 0.4$.}
\label{n=4Bsec029}
\end{figure} 



The set $\Phi \cap \{ a<0,b<0,c>0\}$ divides the set $\{ a<0,b<0,c>0\}$ into 
four sectors, see the first picture in Fig.~\ref{n=4Bsecm01}. 
The intersection $\{ a<0,b<0,c>0\} \cap H_2$ consists of two contractible 
components. They correspond to the two cases $(0,2)$ (the right sector, 
bordering $\{ a<0,b>0,c>0\}$) and $(2,0)$ (the left sector) realizable 
with the sign pattern $(+,+,-,-,+)$. The other two cases realizable in 
$\{ a<0,b<0,c>0\}$ are $(2,2)$ (the sector below) and $(0,0)$ (the sector above). 

For $a<0$, $b>0$, $c>0$, and when the polynomial $P_4$ belongs 
respectively to $H_4$, $H_2$ or $H_0$, it realizes the cases  
$(2,2)$, $(0,2)$ and $(0,0)$. The set  
$\{ a<0,b>0,c>0\} \cap H_2$ is contractible, so only one of the cases 
$(0,2)$ and $(2,0)$ (namely, $(0,2)$) is realizable with the sign pattern 
$(+,+,-,+,+)$ (see the first picture in Fig.~\ref{n=4Bsecm01}).

In $\{ a<0,b<0,c<0\}$ one can realize the cases $(1,3)$ and $(1,1)$. They 
correspond to the domains $\{ a<0,b<0,c<0\} \cap H_4$ (the curvilinear 
triangle) and 
$\{ a<0,b<0,c<0\} \cap H_2$ (its complement). 

In $\{ a<0,b>0,c<0\}$ one  can similarly realize the cases $(3,1)$ (the curvilinear 
triangle) and $(1,1)$ (its complement).

On the fourth and fifth pictures in Fig.~\ref{n=4Bsecm01}
we present 
the intersection of $\Phi$ with the 
plane $\{ a=0.15\}$. The figures are quite similar to  the first three 
pictures in Fig.~\ref{n=4Bsecm01}, 
and the realizable pairs are the same with one 
exception. Namely,  for $a>0$, $b>0$, $c>0$ in the domain $H_4$ it is the pair $(0,4)$ 
which is realized. And, clearly, the third 
component of the sign patterns changes from $-$ to $+$.


The intersections of $\Phi$ with the planes $\{ a=0.26\}$, $\{ a=0.29\}$, 
$\{ a=0.31\}$  
and $\{ a=0.335\}$ are shown on the last picture in Fig.~\ref{n=4Bsecm01} 
and in Fig.~\ref{n=4Bsec029}. For $a_0>0.375$, the 
intersections of $\Phi$ with the planes $\{ a=a_0\}$  
resemble the lower right picture in Fig.~\ref{n=4Bsec029}.

\section{Final Remarks} 

The following  important and closely related to the main topic of the present paper questions remained unaddressed above.  

\begin{problem}
Is the set of all polynomials realizing a given pair $(pos,neg)$ and having a sign pattern $\bsi$ path-connected (if non-empty)?  
\end{problem}

Given a real polynomial $p$ of degree $d$ with all non-vanishing coefficients, consider  the sequence of pairs 
$$\{(pos_0(p),neg_0(p)), (pos_1(p),neg_1(p)), (pos_2(p),neg_2(p)), \dots, (pos_{d-1}(p),neg_{d-1}(p))\},$$ where
$(pos_j(p),neg_j(p))$ is the numbers of positive and negative roots of $p^{(j)}$ respectively.  Observe that if one knows the above sequence of pairs, then one knows the sign pattern of a polynomial $p$ which is assumed to be monic.  Additionally it is easy to construct examples when the converse fails. 

\begin{problem}
Which sequences of pairs are realizable by real polynomials of degree $d$ with all non-vanishing coefficients? 
\end{problem}
Notice that a similar problem for the sequence of pairs of real roots (without division into positive and negative) was considered in \cite{Ko}. 
One can find easily examples of non-realizable sequences $\{ (pos_j(p),neg_j(p))\} _{j=0}^{d-1}$. E.~g. for $d=4$ this is the sequence 
$(2,0)$, $(2,1)$, $(1,1)$, $(0,1)$. Indeed, the sign pattern must be $(+,+,-,+,+)$ about which we know that it is not realizable 
with the pair $(2,0)$. However it is not self-evident that all non-realizable sequences are obtained in this way.

\medskip
Our final question is as follows. 

\begin{problem}
Is the set of all polynomials realizing a given sequence of pairs  as above path-connected (if non-empty)?  
\end{problem}

\end{document}